\newcommand{\ignore}[1]{}
\newcommand{\db}{{\overline{\partial}}}
\renewcommand{\Re}{\operatorname{Re}}
\renewcommand{\Im}{\operatorname{Im}}
\newcommand{\norm}[1]{\left\lVert {#1} \right\rVert}
\newcommand{\C}{{\mathbb{C}}}
\newcommand{\bC}{{\mathbb{C}}}
\newcommand{\sC}{{\mathcal{C}}}
\def\di{\partial}
\def\e{\epsilon}
\def\zb{\overline{z}}
\newcommand{\bea}{\begin{eqnarray*}}
\newcommand{\eea}{\end{eqnarray*}}
\newtheorem{thm}{Theorem}[section]
\newtheorem{mainthm}{Main Theorem}
\newtheorem*{thmnonum}{Theorem}
\newtheorem{lemma}[thm]{Lemma}
\theoremstyle{definition}
\newtheorem{defn}[thm]{Definition}
\newtheorem{example}[thm]{Example}
\theoremstyle{remark}
\author{Dusty Grundmeier}
\address{Department of Mathematics,
Harvard University, Cambridge, MA 02138, USA}
\email{deg@math.harvard.edu}
\author{Lars Simon}
\address{Department of Mathematical Sciences, Norwegian University of Science and Technology, 
Trondheim, NO-7491, Norway}
\email{lars.simon@ntnu.no}
\author{Berit Stens{\o}nes}
\address{Department of Mathematical Sciences, Norwegian University of Science and Technology, 
Trondheim, NO-7491, Norway}
\email{berit.stensones@math.ntnu.no}
\thanks{The third author is supported by the Research Council of Norway, Grant number 240569/F20.}
\title[Sup-norm Estimates for $\overline{\partial}$]
{Sup-norm Estimates for $\overline{\partial}$}
\begin{document}

\subjclass[2010]{32T25, 32A26.}
\keywords{Finite type, Bumping, H{\"o}lder estimates, Sup-norm estimates, $\overline{\partial}$-equation}

\maketitle
\thispagestyle{empty}
\enlargethispage{\baselineskip}


\begin{abstract}
We develop a method for proving sup-norm and H{\"o}lder estimates for $\db$ on wide class of finite type pseudoconvex domains in $\bC^n$. A fundamental obstruction to proving sup-norm estimates is the possibility of singular complex curves with exceptionally high order of contact with the boundary. Our method handles this problem, and in $\mathbb{C}^3$, we prove sup-norm and H{\"o}lder estimates for all bounded, pseudoconvex domains with real-analytic boundary.
\end{abstract}

 \section{Introduction} \label{section:intro}
 
A fundamental problem in complex analysis is to estimate solutions to the Cauchy-Riemann equations. In this paper, we study pseudoconvex domains with real-analytic boundary in $\mathbb{C}^n$ where $n\geq 3$. We develop a method to solve the $\db$-equation for $(0,1)$-forms with sup-norm and H{\"o}lder estimates.
 
In the 1970's, Henkin \cite{H1:dbar} and Ramirez \cite{Ram} developed integral kernel techniques for solving $\db$ and proved sup-norm estimates for $\db$ for bounded strictly pseudoconvex domains. However, for general pseudoconvex domains, the situation is more subtle. In \cite{Sib}, Sibony gave an example of a smooth, bounded pseudoconvex domain in $\mathbb{C}^3$ where sup-norm estimates are not possible. Sibony's example shows that pseudoconvexity alone is not enough to solve $\db$ with sup-norm estimates. Therefore it makes sense to restrict to the study to D'Angelo finite type domains (see \cites{JPD:book,JPD:type} for discussions of D'Angelo finite type). In 1986, Forn{\ae}ss \cite{F1:supnorm} proved sup-norm estimates for a wide class of domains in $\bC^2$, including the Kohn-Nirenberg example. In 1988, Fefferman and Kohn \cite{FK:holderestimates} solved the problem for finite type domains in $\C^2$. Finally in 1990, Range \cite{R1:holder} proved H{\" o}lder estimates for pseudoconvex domains of finite type in $\C^2$ using integral kernel methods. In a series of papers, Forn{\ae}ss-Diederich-Wiegerinck \cite{DFW} and Forn{\ae}ss-Diederich-Fischer \cite{DFF} proved sup-norm estimates for $\db$ on convex, finite type domains in higher dimensions using ideas from McNeal \cite{McN:convex}. However, there has been little progress in the last thirty years and new ideas have been required. 

The geometry is much more complicated in dimensions three or higher. A major difficulty is the possibility of singular complex curves with exceptionally high order of contact with the boundary, and hence the type might change in different directions in subtle ways. Even for relatively simple domains in $\mathbb{C}^3$, the existence of sup-norm estimates is unknown. For instance, it's been an open problem to prove sup-norm estimates for domains where the type is the same in all directions. In this paper we deal with these added complexities in higher dimensions. For domains where the type is the same in all directions, our method establishes sup-norm estimates in dimension 3 or higher.  Furthermore, we prove sup-norm estimates even in cases where there are curves with exceptionally high order of contact with the boundary, and in $\mathbb{C}^3$, we completely handle these added difficulties and establish sup-norm estimates for bounded pseudoconvex domains with real-analytic boundary. More precisely, we prove the following two main theorems.

 \begin{mainthm} \label{thm:supnorm}  Suppose $\Omega$ is a bounded pseudoconvex domain with real-analytic boundary such that if $p\in \di \Omega$ then locally there exists another pseudoconvex domain $\Omega_p^{*}$ and a function $\Phi$ such that 
 \begin{enumerate}
 \item $\overline{\Omega}\setminus \{ p\} \subset \Omega^{*}_p$,
 \item $|\Phi(q)| \sim \text{dist}(q, \di \Omega_{p}^{*})$ when $q \in \di \Omega$, and
 \item $\{\Phi =0\} \cap \Omega_p^{*}=\{p\}$.
 \end{enumerate}
 If $f$ is a $\overline{\partial}$-closed $(0,1)$-form on $\overline{\Omega}$, then there exists a solution $u$ of $\overline{\partial} u = f$ on $\Omega$ such that
 \begin{equation*}
 ||u||_{\infty} \leq C_\Omega ||f||_{\infty}
 \end{equation*}
 where $C_\Omega$ is independent of $f$. 
 
 In fact, if $\Omega$ has D'Angelo type $2L$, then for every $\delta >0$, there is a solution $u=u_\delta$ as above that satisfies $\left(\frac{1}{2L}-\delta\right)$-H{\"o}lder estimates with constant depending only on $\Omega$ and $\delta$. 
 \end{mainthm}
 
 Thus Theorem \ref{thm:supnorm} reduces the sup-norm estimates problem to constructing the bumped-out domain $\Omega_p$ and the function $\Phi$. This reduction gives many new situations where we can give sup-norm estimates in $\mathbb{C}^n$ for $n\geq 3$. Using the results of Noell \cite{Noell}, Bharali and Stens{\o}nes \cite{BS:plurisubharmonic}, Bharali \cite{B:model}, and Forn{\ae}ss and Stens{\o}nes \cite{FS:alg}, and Simon \cite{Lars}, one can see wide classes of domains in $\bC^n$ where we can construct $\Omega_p^{*}$ and $\Phi$. We provide examples in the next section. 
 
 In $\mathbb{C}^3$, we explicitly construct the bumped-out domain $\Omega_p^{*}$ and the function $\Phi$, and hence we completely solve the problem of finding sup-norm estimates for $\db$ in $\mathbb{C}^3$.
 
  \begin{mainthm} \label{thm:supnormC3}  Suppose $\Omega$ is a bounded pseudoconvex domain with real-analytic boundary of finite D'Angelo type $2L$ in $\bC^3$ and $f$ is a $\overline{\partial}$-closed $(0,1)$-form on $\overline{\Omega}$.  Then there exists a solution $u$ of $\overline{\partial} u = f$ on $\Omega$ such that
 \begin{equation*}
 ||u||_{\infty} \leq C_\Omega ||f||_{\infty}
 \end{equation*}
 where $C_\Omega$ is independent of $f$. Furthermore, for every $\delta >0$, there is a solution $u=u_\delta$ as above that satisfies $\left(\frac{1}{2L}-\delta\right)$-H{\"o}lder estimates with constant depending only on $\Omega$ and $\delta$.
 \end{mainthm}

 We conclude the introduction with an outline of the rest of the paper. We focus on the proofs of the main theorems in $\mathbb{C}^3$. In section \ref{section:examples}, we give examples of classes of domains where our method applies. The main approach is to solve the Cauchy-Fantappie equation pointwise. We then create an integral kernel on a smaller domain $\Omega_\e$ and get uniform estimates on the smaller domain. We then use a normal families argument to get estimates on $\Omega$. Section \ref{section:ingredients} develops this machinery and shows how we use the Cauchy-Fantappie equation. In section \ref{section:koszul}, we show how to use a Koszul complex to modify our smooth solutions to the Cauchy-Fantappie equation. In section \ref{section:phi} and \ref{section:ps}, we show how to build a non-holomorphic support function. In section \ref{section:weights}, we develop the plurisubharmonic weights needed to use the full extent of H{\"o}rmander's $L^2$ theory. In section \ref{section:omegaestimate}, we give $L^2$-estimates, and in section \ref{section:01estimates}, we get pointwise estimates. We prove the main theorem in section \ref{section:proof}. Finally, in section \ref{sectionhigherdim}, we show how to reduce the problem to a bumping problem in the higher-dimensional case.

 \section{Examples} \label{section:examples}
 
 In this section, we give examples of wide classes of domains where our method applies.

 \begin{example} (A. Noell \cite{Noell}). Suppose $P(z_1, \dots, z_{n-1})$ is a homogeneous, plurisubharmonic polynomial of degree $2k$ on $\mathbb{C}^{n-1}$ that is not harmonic along any complex line through the origin. 
 Let 
 \begin{equation*}
 \Omega = \{ (\zeta, z_1, \dots, z_{n-1}): \Re(\zeta) + P(z_1,\dots, z_{n-1})<0\}.
 \end{equation*}
Using a result of Noell \cite{Noell}, there exists the bumped out domain $\Omega_p^{*}$ and we use $\Phi=\zeta-A\norm{(z_1,\dots,z_{n-1})}^{2k}$, and hence Theorem \ref{thm:supnorm} applies.
 \end{example} 
 
 \begin{example}
 Let 
 \begin{equation*}
 \Omega = \{ (\zeta, z_1, \dots, z_{n-1}): \Re(\zeta) + \sum_{j=1}^{k} |f_j(z_1, \dots, z_{n-1})|^2<0\} 
 \end{equation*}
 where the common zero set of $f_1$, \dots $f_k$ is 0 and $\Omega$ is finite type. Then we take $$\Omega_0^{*} =\{\Re(\zeta) +(1-\e) \sum_{j=1}^k{|f_j(z_1,\dots, z_{n-1})|^2} <0\}$$
 and $$\Phi = \zeta - A \sum_{j=1}^{k} {|f_j(z_1, \dots, z_{n-1})|^2}.$$ Nearby boundary points are of the same kind.
 \end{example} 
 
  \begin{example}
 Let 
 \begin{equation*}
 \Omega = \{ z \in \mathbb{C}^n: \sum_{j=1}^{k} |f_j(z)|^2<1\}.
 \end{equation*}
Here we can locally transform the domain to be of the form in the previous example.
 \end{example} 
 
 \begin{example}
 Let 
 \begin{equation*}
 \Omega = \{ (\zeta, z_1, z_2, z_3, z_4 ): \Re(z_1) + |z_3^2-z_4^3|^6|z_2|^2+|z_3^2-z_4^3|^8+\frac{15}{7}|z_3^2-z_4^3|^2\Re(z_3^2-z_4^3)^6+\norm{z}^{10}<0\}.
 \end{equation*}
 \end{example}

 \section{Key Ingredients in the Theorem} \label{section:ingredients}
 
  We briefly highlight some key techniques in the paper now. The fundamental approach of this paper is to use integral kernel techniques. We use an idea inspired by a comment of Range given in a workshop in Beijing to construct a non-holomorphic support function and then solve a smooth division problem. Using a Koszul complex and the full extent of H{\"o}rmander $L^2$-techniques, we modify these functions to obtain holomorphic solutions to the Cauchy-Fantappie equations. Finally we use ``pseudoballs" (see Catlin \cite{C1:invariantmetrics} and McNeal \cite{McN:convex}) and subaveraging to pass from $L^2$ estimates to pointwise estimates.
 
 \subsection{Bumping to Type}
 
 We begin by giving a precise definition for bumping.
 
 \begin{defn}
 Given a pseudoconvex domain $\Omega$ and $p\in \partial \Omega$, then $\Omega$ can be {\it locally bumped at} $p$ if there exists a neighborhood $U$ of $p$ and a larger pseudoconvex domain $\Omega_p^{*}$ such that $\overline{\Omega}  \setminus \{p\} \cap U \subset \Omega_p^{*}$. We then say that $\Omega_p^{*}$ is a {\it local bumping at} $p$.
 \end{defn}
 
 If $\Omega \subset \C^2$ and $p$ is of type $2k$, then $\Omega_p^{*}$ can be chosen so that boundaries meet to order $2k$ in the complex tangential direction. In $\bC^n$ for $n\geq 3$, there are added difficulties from the additional complex tangential directions. For example, type will change in different complex directions. Even more, there might be singular complex curves with maximal order of tangency.
 
 In \cite{DF}, Diederich and Forn{\ae}ss show that if $\Omega$ is pseudoconvex and of finite type at $p\in\partial\Omega$, then $\Omega$ can be bumped to some high order at $p$ (potentially much higher than the type). For our construction we need to bump to the lowest possible order in all directions. In order to make this precise we first need to define what it means for a polynomial to be bumpable.
 
 \begin{defn}\label{defbumppoly}
Let $P$ be a homogeneous plurisubharmonic polynomial on $\mathbb{C}^{n-1}$. We say that $P$ {\it can be bumped} if there exists a plurisubharmonic function $H$, smooth away from $0$ and homogeneous of the same degree as $P$, such that for some small $\epsilon >0$ we have $H\leq P-\epsilon |P|$ with equality precisely in $0$ and along the complex lines through $0$ along which $P$ is harmonic.

In the {\it weighted}-homogeneous case this is defined by homogenizing in the obvious way.
 \end{defn}

 \begin{defn}
 Let $\Omega\subseteq\mathbb{C}^n$ be a pseudoconvex domain. We take $z=(\zeta, z')\in \bC \times \bC^{n-1}$. We say that $\Omega$ is {\it bumpable to type at} $p\in\partial\Omega$ if it is locally contained in a pseudoconvex domain $\widetilde{\Omega}$ with $p\in\partial\widetilde{\Omega}$, which locally at $p$ is given as
\begin{align*}
    \{\Re(\zeta ) +\sum_{j=1}^{J}M_j(z') +\norm{z'}^{2M}+\mathcal{O}(|\xi |^2,|\Im (\xi )|\norm{z'})<0 \},
\end{align*} 
where each of the $M_j$ is a weighted-homogeneous plurisubharmonic polynomial that can be bumped as in Definition \ref{defbumppoly}.

If $\Omega$ is bumpable to type at all of its boundary points, we simply say that $\Omega$ is {\it bumpable to type}.
 \end{defn}

\subsection{Henkin Integral Kernel}

We use the Henkin integral kernel to solve $\db$ and obtain sup-norm estimates. Let $f= \sum_{i}{f_i d \zb_i}$ be a closed $(0,1)$-form and 
\begin{equation*}
S_\Omega f = c_n \int_{\di \Omega \times [0,1]}f \wedge \eta(w) \wedge \omega(\zeta)
-c_n \int_\Omega \frac{f(\zeta)}{\norm{\zeta-z}^{2n}} \eta(\overline{\zeta} -\zb) \wedge \omega(\zeta)
\end{equation*}
where 
\begin{align*}
w(\zeta) & =d\zeta_1 \wedge \dots \wedge d\zeta_n \text{ and }\\ 
\eta(\zeta) & = \sum_{i=1}^n (-1)^{i-1}\zeta_id\zeta_1\wedge \cdots \wedge \widehat{d\zeta_i}\wedge
\cdots \wedge d\zeta_n.
\end{align*}
Further let 
\begin{align*}
w(\zeta) &= (w_1, \dots, w_n) \text{ and }\\
w_i &= \lambda \frac{\zeta_i-z_i}{\norm{\zeta-z}^2}+(1-\lambda) h_i(\zeta, z)
\end{align*}
where $\lambda \in [0,1]$ and $h_1, \dots, h_n$ solves the Cauchy-Fantappie equation
\begin{equation*}
\sum_{i=1}^{n}{h_i(\zeta,z)(\zeta_i-z_i) \equiv 1}
\end{equation*}
when $\zeta  \in \partial \Omega$, $z\in \Omega$, and $z \mapsto h_i(\zeta,z)$ is holomorphic in $\Omega$. Then $\db S_\Omega(f) =f$. Our goal is to construct the functions $h_i$ such that $$\norm{S_\Omega f}_\infty \leq C_\Omega \norm{f}_\infty.$$

The challenge is to show that if $f$ is bounded by a constant $C'$, then there is a constant $C$ which only depends on $C'$ and $\Omega$ such that 
\begin{equation*}
\left| \int_{\di \Omega \times [0,1]} f \wedge \eta(w) \wedge \omega(\zeta) \right| < C.
\end{equation*}

When $\Omega\subseteq\mathbb{C}^3$, then
\begin{equation*}
\eta(w) = w_1 dw_2 \wedge dw_3 -w_2 dw_1 \wedge dw_3 + w_3 dw_1 \wedge dw_2,
\end{equation*} and we can expand a typical term as follows
\begin{equation*}
w_i dw_j \wedge dw_k = w_i \sum_{m=1}^3{\left[ \frac{\di w_j}{\di \lambda}\frac{w_k}{\di \overline{\zeta_m}}-\frac{\di w_j}{\di \overline{\zeta_m}}\frac{w_k}{\di \lambda} \right]d\lambda \wedge d\overline{\zeta_m}}+ \sum{u_{m,n} d\overline{\zeta_n}\wedge d\overline{\zeta_m}}
\end{equation*}

The above integral includes the $(0,1)$-form $f$ and the $(3,0)$-form $\omega$ and the real dimension of $\di \Omega$ is five. Thus the integral cannot support the terms $\sum{u_{m,n} d\overline{\zeta_n}\wedge d\overline{\zeta_m}}$. Therefore we need only study the expressions
\begin{equation*}
w_i \sum_{m=1}^3{\left[ \frac{\di w_j}{\di \lambda}\frac{w_k}{\di \overline{\zeta_m}}-\frac{\di w_j}{\di \overline{\zeta_m}}\frac{w_k}{\di \lambda} \right]d\lambda \wedge d\overline{\zeta_m}}.
\end{equation*} When we calculate $\eta(w)$ and ignore the terms that cannot be supported in the integral over $\di \Omega$, we get 
\begin{equation*}
\eta(w) = \lambda^2 B- \lambda \eta_1 -(1-\lambda) \eta_2
\end{equation*}
where $B$ is the Bochner-Martinelli kernel and

\begin{align*}
\eta_1 & =  \sum_{n=1}^3 \Bigg\{
\frac{\overline{\zeta}_1-\overline{z}_1}{\|\zeta-z\|^2}
\Bigg[h_2\left(  \frac{\delta_{n,3}}{\|\zeta-z\|^2}-\frac{(\overline{\zeta}_3-\overline{z}_3 )(\zeta_n-z_n)}{\|\zeta-z\|^4}                                                \right)\\
&  -  h_3\left(  \frac{\delta_{n,2}}{\|\zeta-z\|^2}-\frac{(\overline{\zeta}_2-\overline{z}_2 )(\zeta_n-z_n)}{\|\zeta-z\|^4}                                                \right) \Bigg]\\
& -  \frac{\overline{\zeta}_2-\overline{z}_2}{\|\zeta-z\|^2}
\Bigg[h_1\left(  \frac{\delta_{n,3}}{\|\zeta-z\|^2}-\frac{(\overline{\zeta}_3
	-  \overline{z}_3 )(\zeta_n-z_n)}{\|\zeta-z\|^4}                                                \right)\\
&  -    h_3\left(  \frac{\delta_{n,1}}{\|\zeta-z\|^2}-\frac{(\overline{\zeta}_1-\overline{z}_1 )(\zeta_n-z_n)}{\|\zeta-z\|^4}                                                \right) \Bigg]\\
& +  \frac{\overline{\zeta}_3-\overline{z}_3}{\|\zeta-z\|^2}
\Bigg[h_1\left(  \frac{\delta_{n,2}}{\|\zeta-z\|^2}-\frac{(\overline{\zeta}_2
	-  \overline{z}_2)(\zeta_n-z_n)}{\|\zeta-z\|^4}                                                \right) \\
& -   h_2\left(  \frac{\delta_{n,1}}{\|\zeta-z\|^2}-\frac{(\overline{\zeta}_1-\overline{z}_1 )(\zeta_n-z_n)}{\|\zeta-z\|^4}                                                \right) \Bigg]\Bigg\}d\lambda\wedge d\overline{\zeta}_n
\end{align*}  
and 
\begin{align*}
\eta_2 & =  \sum_{n=1}^3 \Bigg\{
\frac{\overline{\zeta}_1-\overline{z}_1}{\|\zeta-z\|^2}\left(h_2\frac{\partial h_3}{\partial \overline{\zeta}_n}-h_3\frac{\partial h_2}{\partial \overline{\zeta}_n}\right)\\
& -  \frac{\overline{\zeta}_2-\overline{z}_2}{\|\zeta-z\|^2}\left(h_1\frac{\partial h_3}{\partial \overline{\zeta}_n}-h_3\frac{\partial h_1}{\partial \overline{\zeta}_n}\right)\\
& +  \frac{\overline{\zeta}_3-\overline{z}_3}{\|\zeta-z\|^2}\left(h_1\frac{\partial h_2}{\partial \overline{\zeta}_n}-h_2\frac{\partial h_1}{\partial \overline{\zeta}_n}\right) \Bigg\}d\lambda \wedge d\overline{\zeta}_n.
\end{align*}

We see that $\|\eta_1\|$ has singularities of order $|h_i|\frac{1}{\|\zeta-z\|^3}.$

The integral that is the most difficult to estimate is
$$
\int_{\di \Omega \times [0,1]} f\wedge \eta_2\wedge \omega.
$$

Observe that $\omega(\zeta)=d\zeta_1\wedge d\zeta_2 \wedge d\zeta_3$         
already has a differential which is orthogonal to the complex tangential direction, so
$$
\int_{\di \Omega \times [0,1]}f\wedge \eta_2\wedge \omega
$$
can only support differentials $d\overline{\zeta}_n$ from $\eta_i$ that is complex tangential to $\partial \Omega.$

Therefore we only need to estimate the integrals with terms of the form
$$
\frac{\overline{\zeta}_i-\overline{z}_i}{\|\zeta-z\|^2}\left(h_j\frac{\partial h_k}{\partial \overline{\zeta_n}}
-h_k\frac{\partial h_j}{\partial \overline{\zeta_n}}\right).
$$
 
 \subsection{Pointwise Solutions to Cauchy-Fantappie Equation}
 
We are not able to solve the Cauchy-Fantappie equation with solutions that are smooth in the boundary variable. Instead, we solve the Cauchy-Fantappie equation pointwise; i.e.\ given $p=(\eta_1^0,\eta_2^0,\eta_3^0)\in \di \Omega$, we find $h_1$, $h_2$, $h_3$ such that 
\begin{equation*}
\sum_{j=1}^3{h_j(p,z)(\eta_j^0-z_j)\equiv 1}
\end{equation*}
where $h_j$ is holomorphic in $z$. The resulting integral kernel would be nicely integrable if $h_j$ were continuous in $p$; instead, we need to use additional techniques from \cite{R1:holder} to construct a sequence of integral kernels on slightly smaller domains that give uniform estimates. We then use a standard normal families argument to give sup-norm estimates on the original domain. 

We now need to choose good smooth solutions $g_j$ which can be modified using a Koszul complex with H{\"o}rmander's $L^2$-theory. Unfortunately, using the usual smooth solutions to the division problem as in Skoda \cite{S:l2estimates} does not yield sufficient estimates. We therefore need to use a more careful choice of smooth solutions. Our choice is inspired by a suggestion of Range in a lecture in Beijing. This choice is designed to reflect the type at a boundary point in every complex tangential ``direction.'' 

More precisely, we will use the bumping to show that locally there exists $\Phi$ such that:
\begin{enumerate}
    \item $\Phi=(\eta_1^0-z_1)-F((\eta_2^0-z_2),\overline{(\eta_2^0-z_2)},(\eta_3^0-z_3),\overline{(\eta_3^0-z_3)}),$
    \item $F>0$ away from $(0,0),$  
    
    \item $\{ \Phi=0\}\cap \Omega_p^{*}=\emptyset$,
    \item $\left|\Phi\big|_{\overline{\Omega}}\right|\sim  \text{dist}(\cdot,\di \Omega_p^{*})$.
\end{enumerate}

Now we let $g_1=\frac{1}{\Phi}$, $g_2=\frac{P_2}{\Phi}$, and $g_3=\frac{P_3}{\Phi}$ such that 
\begin{equation*}
    \frac{1}{\Phi}(\eta_1^{0}-z_1)+\frac{P_2}{\Phi}(\eta_2^0-z_2)+\frac{P_3}{\Phi}(\eta_3^{0}-z_3)\equiv 1.
\end{equation*}

Finally, we use the following version of H{\"o}rmander's theorem.
 
 \begin{thm} (H\"ormander, Demailly) \label{thm:hor}
Let $\rho$ be a plurisubharmonic function on $D\subset \mathbb C^n$, pseudoconvex, $v$ is a $\overline\partial-$
closed $(0,q)-$form. Then there exists a $(0,q-1)-$form such that $\overline\partial u=v$ and

$$
\int_D |u|^2 e^{-\rho} \leq C \int_D <A^{-1}v, v> e^{-\rho}\text{,}
$$
where $A$ depends on $\rho$ and $q$. In case $q=1$, the matrix $A$ is just the Complex Hessian matrix of $\rho$.
\end{thm}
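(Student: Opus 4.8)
The plan is to prove this by the standard Hilbert-space ($L^2$-duality) method for $\db$, in the form due to H\"ormander with the curvature refinement of Demailly. First I would reduce to the case where $\rho$ is of class $C^2$ and the curvature operator $A$ associated to $\rho$ on $(0,q)$-forms (which for $q=1$ is simply the complex Hessian $(\partial^2\rho/\partial z_j\,\partial\overline z_k)_{j,k}$) is positive definite; the general case is then recovered by regularization at the end. Working in the weighted spaces $L^2_{(0,r)}(D)$ with norm $\|\cdot\|_\rho$ induced by $e^{-\rho}$, let $T\colon L^2_{(0,q-1)}(D)\to L^2_{(0,q)}(D)$ and $S\colon L^2_{(0,q)}(D)\to L^2_{(0,q+1)}(D)$ be the densely defined closed operators given by $\db$, so that $S\circ T=0$. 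By the classical functional-analytic criterion (solvability with estimate is equivalent to a dual a priori inequality), to find $u$ with $\db u=v$ and $\int_D|u|^2e^{-\rho}\le C\int_D\langle A^{-1}v,v\rangle e^{-\rho}$ it suffices to prove
\[
  |\langle v,g\rangle_\rho|^2\le C\Big(\int_D\langle A^{-1}v,v\rangle e^{-\rho}\Big)\big(\|T^{\ast}g\|_\rho^2+\|Sg\|_\rho^2\big)
\]
for every $g$ lying both in the domain of $T^{\ast}$ and in the domain of $S$. The term $\|Sg\|_\rho^2$ is harmless: decomposing $g=g_1+g_2$ with $g_1\in\ker S$ and $g_2\perp\ker S$ (so $g_2\in\ker T^{\ast}$ and $\langle v,g_2\rangle_\rho=0$ because $v\in\ker S$), one has $\langle v,g\rangle_\rho=\langle v,g_1\rangle_\rho$ and $T^{\ast}g=T^{\ast}g_1$, so it is enough to treat $g\in\ker S$.

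Second, I would obtain this dual inequality from the Bochner--Kodaira--Nakano--Morrey--Kohn--H\"ormander identity. For a smooth $(0,q)$-form $g$ with compact support in $D$, two integrations by parts yield
\[
  \|\db g\|_\rho^2+\|\db^{\ast}_\rho g\|_\rho^2=\int_D\langle A g,g\rangle e^{-\rho}+(\text{a manifestly nonnegative term}),
\]
which for $q=1$ is the explicit identity $\|\db g\|_\rho^2+\|\db^{\ast}_\rho g\|_\rho^2=\sum_{j,k}\int_D\frac{\partial^2\rho}{\partial z_j\,\partial\overline z_k}\,g_j\overline{g_k}\,e^{-\rho}+\sum_{j,k}\int_D\big|\partial g_j/\partial\overline z_k\big|^2e^{-\rho}$. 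Discarding the nonnegative term and applying the Cauchy--Schwarz inequality pointwise with respect to $A$ and then in $L^2$,
\[
  |\langle v,g\rangle_\rho|^2\le\Big(\int_D\langle A^{-1}v,v\rangle e^{-\rho}\Big)\Big(\int_D\langle A g,g\rangle e^{-\rho}\Big)\le\Big(\int_D\langle A^{-1}v,v\rangle e^{-\rho}\Big)\big(\|\db g\|_\rho^2+\|\db^{\ast}_\rho g\|_\rho^2\big),
\]
which is exactly the required inequality, with $C=1$, for $g$ smooth and compactly supported.

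Third --- and this is where pseudoconvexity is used, and where I expect the real work to be --- I would extend the a priori inequality from compactly supported $g$ to all admissible $g$, and then remove the regularity and positivity assumptions. For the extension: a pseudoconvex $D$ carries a smooth strictly plurisubharmonic exhaustion $\psi$, and H\"ormander's density lemma (cutting off with functions of $\psi$, possibly after modifying the weight by a convex increasing function of $\psi$) shows that $C_c^\infty$ forms are dense, in the graph norm $g\mapsto\|g\|_\rho+\|T^{\ast}g\|_\rho+\|Sg\|_\rho$, in the intersection of the domains of $T^{\ast}$ and $S$. Equivalently, one exhausts $D$ by smoothly bounded strictly pseudoconvex subdomains $D_\nu\uparrow D$, on each of which the same integration by parts produces an additional boundary term equal to an integral of the Levi form of $\partial D_\nu$, which is nonnegative by pseudoconvexity, solves $\db u_\nu=v|_{D_\nu}$ with the uniform estimate $\int_{D_\nu}|u_\nu|^2e^{-\rho}\le\int_{D_\nu}\langle A^{-1}v,v\rangle e^{-\rho}\le\int_D\langle A^{-1}v,v\rangle e^{-\rho}$, and extracts a weak limit. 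To remove the remaining hypotheses: if $\rho$ is merely plurisubharmonic, approximate it from above by smooth plurisubharmonic $\rho_\varepsilon\downarrow\rho$ on exhausting subdomains; if $A$ is only positive semidefinite, work with $A+\varepsilon I$ and use the monotone bound $\int|u_\varepsilon|^2e^{-\rho_\varepsilon}\le\int\langle(A+\varepsilon I)^{-1}v,v\rangle e^{-\rho_\varepsilon}$; then let $\varepsilon\downarrow0$, together with a diagonal argument along the exhaustion, to obtain the solution on $D$ with weight $\rho$. The constant may be taken $C=1$, the $C$ in the statement being a safe overestimate. The hard part is precisely this last step: the density/approximation lemma (where pseudoconvexity is essential) together with the several limiting procedures needed to return to a non-smooth weight and a possibly degenerate $A$; by contrast the identity and the duality reduction are formal.
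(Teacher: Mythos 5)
The paper does not prove this statement at all: it is quoted as a known theorem (Hörmander, Demailly), with the relevant sources in the bibliography, and is used as a black box in Sections 7--9. Your outline is precisely the classical argument from those sources --- the functional-analytic duality reduction, the Bochner--Kodaira--Morrey--Kohn--Hörmander identity producing the curvature term $\int_D\langle A g,g\rangle e^{-\rho}$, the pointwise Cauchy--Schwarz step against $A$, and the density/exhaustion and regularization arguments where pseudoconvexity enters --- and it is essentially correct, including the caveats you flag about the graph-norm density lemma and the interpretation of $\langle A^{-1}v,v\rangle$ when $A$ degenerates; so there is no divergence from the paper to report.
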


This result together with the subaveraging principle will give the desired estimates.
 

\section{Koszul Complex} \label{section:koszul}

Given the choice of smooth solutions $g_j$ from the last section, we now illustrate how we modify them to get holomorphic solutions. While this technique is well-known and standard, we develop the expressions explicitly in order to see exactly what kind of estimates we obtain.

In order to simplify notation we write $\eta$ for $\eta^0$. We start with smooth $g_1,g_2,g_3$ in $\Omega^*_\eta$
such that

$$
g_1({\eta},z)(\eta_1-z_1)+g_2({\eta},z)(\eta_2-z_2)+g_3({\eta},z)(\eta_3-z_3) \equiv 1.
$$

This gives

$$
\overline\partial g_1({\eta},z)(\eta_1-z_1)+\overline\partial g_2({\eta},z)(\eta_2-z_2)+\overline\partial g_3({\eta},z)(\eta_3-z_3) =0,
$$
and hence
\bea
\overline\partial g_1 & = & \overline\partial g_1({\eta},z)(g_1({\eta},z)(\eta_1-z_1)+g_2({\eta},z)(\eta_2-z_2)+g_3({\eta},z)(\eta_3-z_3))\\
& = & g_1\overline\partial g_1 (\eta_1-z_1)+g_2\overline\partial g_1 (\eta_2-z_2)+g_3\overline\partial g_1 (\eta_3-z_3)\\
& = & g_1[ -   \overline\partial g_2({\eta},z)(\eta_2-z_2)-\overline\partial g_3({\eta},z)(\eta_3-z_3)          ]+g_2\overline\partial g_1 (\eta_2-z_2)+g_3\overline\partial g_1 (\eta_3-z_3)\\
& = & [g_2\overline\partial g_1-g_1\overline\partial g_2](\eta_2-z_2)+[g_3\overline\partial g_1-g_1\overline\partial g_3](\eta_3-z_3).
\eea
Similarly we obtain
$$
\overline\partial g_2  =-[g_2\overline\partial g_1-g_1\overline\partial g_2](\eta_1-z_1)+[g_3\overline\partial g_2-g_2\overline\partial g_3](\eta_3-z_3),
$$
and
$$
\overline\partial g_3  =[g_3\overline\partial g_1-g_1\overline\partial g_3](\eta_1-z_1)+[g_3\overline\partial g_2-g_2\overline\partial g_3](\eta_2-z_2).
$$

Simplifying notation, we introduce the following
\bea
\overline{\partial} g_1 & = & h_{1,2}(\eta_2-z_2)+h_{1,3}(\eta_3-z_3)\\
\overline{\partial} g_2 & = & -h_{1,2}(\eta_1-z_1)+h_{2,3}(\eta_3-z_3)\\
\overline{\partial} g_3 & = & -h_{1,3}(\eta_1-z_1)-h_{2,3}(\eta_2-z_2)
\eea
where
$$
h_{i,j} = g_j\overline\partial g_i-g_i\overline\partial g_j.
$$

Thus
\bea
\overline{\partial} h_{1,2}(\eta_2-z_2)+\overline\partial h_{1,3}(\eta_3-z_3)  & = & 0\\
-\overline{\partial} h_{1,2}(\eta_1-z_1)+\overline\partial h_{2,3}(\eta_3-z_3)  & = & 0\\
-\overline{\partial} h_{1,3}(\eta_1-z_1)+\overline\partial h_{2,3}(\eta_2-z_2)  & = & 0.
\eea
Hence
\bea
\overline\partial h_{1,2} & = & \overline\partial h_{1,2}(g_1({\eta},z)(\eta_1-z_1)+g_2({\eta},z)(\eta_2-z_2)+g_3({\eta},z)(\eta_3-z_3))\\
& = & g_1 \overline\partial h_{1,2}(\eta_1-z_1)+g_2 \overline\partial h_{1,2}(\eta_2-z_2)+g_3 \overline\partial h_{1,2}(\eta_3-z_3)\\
& = & g_1 \overline\partial h_{2,3}(\eta_3-z_3)-g_2 \overline\partial h_{1,3}(\eta_3-z_3)+g_3 \overline\partial h_{1,2}(\eta_3-z_3)\\
& = & [g_1 \overline\partial h_{2,3}-g_2 \overline\partial h_{1,3}+g_3 \overline\partial h_{1,2}](\eta_3-z_3)\\
\overline\partial h_{1,3} & = &  -[g_1 \overline\partial h_{2,3}-g_2 \overline\partial h_{1,3}+g_3 \overline\partial h_{1,2}](\eta_2-z_2)\\
\overline\partial h_{2,3} & = &  [g_1 \overline\partial h_{2,3}-g_2 \overline\partial h_{1,3}+g_3 \overline\partial h_{1,2}](\eta_1-z_1),
\eea
or
\bea
\overline\partial h_{1,2} & = & \omega (\eta_3-z_3)\\
\overline\partial h_{1,3} & = & -\omega (\eta_2-z_2)\\
\overline\partial h_{2,3} & = & \omega (\eta_1-z_1)
\eea
where 
$$
\omega=g_1 \overline\partial h_{2,3}-g_2 \overline\partial h_{1,3}+g_3 \overline\partial h_{1,2}.
$$
From the above we also see that $\omega$ is a closed $(0,2)$-form. We will use H\"ormander's Theorem (Theorem \ref{thm:hor} above) to solve $\overline\partial$ for the occurring $(0,2)$-form and $(0,1)$-forms with an individual weight for each of the forms.

We will make a careful choice of a plurisubharmonic weight $\psi_0$ such that $\omega\in L^2(\psi_0)$,
and find a $(0,1)$-form $u\in L^2(\psi_0)$ such that
$$
\overline\partial u = \omega.$$

Then
\begin{itemize}
\item $h_{1,2}-(\eta_3-z_3)u$
\item $h_{1,3}+(\eta_2-z_2)u$
\item $h_{2,3}-(\eta_1-z_1)u$
\end{itemize}

are all closed forms and
\bea
\overline\partial g_1 & = & (h_{1,2}-(\eta_3-z_3)u)(\eta_2-z_2)-(h_{1,3}+(\eta_2-z_2)u)(\eta_3-z_3)\\
\overline\partial g_2 & = & -(h_{1,2}-(\eta_3-z_3)u)(\eta_1-z_1)+(h_{2,3}-(\eta_1-z_1)u)(\eta_3-z_3)\\
\overline\partial g_3 & = & -(h_{1,3}+(\eta_2-z_2)u)(\eta_1-z_1)-(h_{2,3}-(\eta_1-z_1)u)(\eta_2-z_2).
\eea

Next we need to find good, minimal, weights $\psi_1,\psi_2$ and $\psi_3$ such that if $u\in L^2(\psi_0),$ then

\begin{itemize}
\item $h_{1,2}-(\eta_3-z_3)u$ is in $L^2(\psi_1)$
\item $h_{1,3}+(\eta_2-z_2)u$ is in $L^2(\psi_2)$
\item $h_{2,3}-(\eta_1-z_1)u$ is in $L^2(\psi_3)$
\end{itemize}

Then we find $v_1,v_2$ and $v_3$, functions in $L^2(\psi_1),L^2(\psi_2)$ and $v_3\in L^2(\psi_3)$
such that
\bea
\overline{\partial} v_1 & = & h_{1,2}-(\eta_3-z_3)u\\
\overline{\partial} v_2 & = & h_{1,3}+(\eta_2-z_2)u\\
\overline{\partial} v_3 & = & h_{2,3}-(\eta_1-z_1)u.
\eea

We now let

\bea
h_1 & = & g_1-v_1(\eta_2-z_2)-v_2(\eta_3-z_3)\\
h_2 & = & g_2+v_1(\eta_1-z_1)-v_3(\eta_3-z_3)\\
h_3 & = & g_3+v_2(\eta_1-z_1)+v_3(\eta_2-z_2).
\eea

Recall $g_j=\frac{P_j}{\Phi}$ where $P_1=1$. When we go through the calculations we see that
$$
\omega=2\frac{\overline\partial P_3\wedge \overline\partial P_2}{\Phi^3}.
$$

Now we need to choose $\Phi,P_1,P_2$ and $P_3$.


\section{Setting up $\Phi$} \label{section:phi}

The next part of the paper will deal with the choice of the functions $g_1,g_2$ and $g_3.$  The critical part is to carry out the construction locally near $\eta$. Afterwards one simply extends them to $\Omega_{\eta}^*$. Now, for ease of notation, we assume that ${\eta}=0$. Locally around $0$, the domain $\Omega$ is given as
\begin{align*}
\{\operatorname{Re}({\xi})+r(z,w)+s({\xi},z,w)<0\}\text{,}
\end{align*}
where $s$ and $r$ are real-analytic, $s({\xi},z,w)=\mathcal{O}({|{\xi}|}^2,\Vert{(z,w)}\Vert\cdot{}|\operatorname{Im}({\xi})|)$ and $r$ does not have any pluriharmonic terms.

We want to first choose a support function:

$$
\Phi{({\xi},z,w)}={\xi}-F(z,w).
$$

This function is not holomorphic as a function of $z$ and $w$ but will be chosen related to how the bumped
domain $\Omega^*_0$ looks.

Finally we will concretely solve a division problem such that

$$
\frac{P_1}{\Phi}\xi +\frac{P_2}{\Phi}z+\frac{P_3}{\Phi}w\equiv 1
$$
in $\Omega^*_0.$

By Diederich-Forn{\ae}ss \cite{DF} there exist a large $M>0$ and a real-valued polynomial $R(z,w)$, without pluriharmonic terms, such that the following domain is pseudoconvex and locally contains $\Omega$:

\begin{align*}
\widetilde{\Omega}=  \{\operatorname{Re}({\xi})+R(z,w)+C\Vert{(z,w)}\Vert^{2M}+s({\xi},z,w)<0\}\text{.}
\end{align*}
Since the construction of $\Phi$ only depends on the complex tangency of complex curves to the boundary of $\Omega$ at $0$, we will consider $\widetilde{\Omega}$ instead of $\Omega$ and $R$ instead of $r$ for the remainder of this section.

\subsection{Initial Examples} 

The simplest case is if the lowest order term $H_{2k}$ in $R$ is not harmonic along any complex lines through the origin. Then Noell \cite{Noell} showed the domain can be bumped to order $2k$ in all complex tangential directions. In this case we choose $F=A|z|^{2k}+A|w|^{2k}$ where $A$ is a large positive constant and $P_2=-Az^{k-1}\overline{z}^k$ and $P_3=-Aw^{k-1}\overline{w}^{k}$.

From [BS] we know that $H_{2k}$ can only be harmonic along finitely many complex lines through $0$; we denote these lines by
$L_1,L_2,\dots,L_m$. For simplicity let us assume that none of them is the $z-$ axis, so each line is of the form
$L_i=\{(z,w); z=\tau_i w\}.$

The next simplest case is if $R-H_{2k}$ is plurisubharmonic. In this case, near each line $R-H_{2k}=q_{2K_i}(w)+{\mbox{ higher order terms in }}w+\mathcal O((z-\tau_i w)w).$

Further, by changing holomorphic coordinates if need be we may assume that ${H_{2k}}_{I L_i}\equiv 0.$

Near a given $L_i$ we can write 
$$
H_{2k}= Q_{2j_i, 2k-2j_i}((z-\tau_i w),\overline{(z-\tau_i w)},w,\overline{w})+
{\mbox{ terms of order larger than }} 2j_i {\mbox{ in }} (z-\tau_i w).
$$

Here $Q_{2j_i,2k-2j_i} $ is homogeneous in $z-\tau_iw$ and $w$ separately.

Since $H_{2k}$ is plurisubharmonic, it follows that also $Q_{2j_i,2k-2j_i}$ is plurisubharmonic ([BS]).
Moreover $Q_{2j_i,2k-2j_i}=s(z-\tau_iw)^\gamma w^\beta$ where $s$ is subharmonic.

From [BS] it follows that in most cases there exist a function 
$$
B(z,\overline{z},w,\overline{w})\geq |z-\tau_i w|^{2k}+|z-\tau_i w|^{2j_i}|w|^{2k-2j_i}
$$

such that we can find a plurisubharmonic $\tilde{H}_{2k}=\tilde{Q}_{2j_i,2k-2j_i}+R$ and
$H_{2k}\geq \tilde{H}_{2k}+\epsilon B$ for some $\epsilon>0.$

Choose a large $A>0$ and near $L_i$ we let
$$
\Phi_i=\xi-A|z-\tau_i w|^{2k}-A|z-\tau_i w|^{2j_i}|w|^{2k-2j_i}-A|w|^{2K_i}.
$$

Further we choose
\bea
P_1 & = & 1\\
P^i_2 & = & -A(z-\tau_i w)^{k-1}(\overline{z}-\overline{\tau}_i \overline{w})^k\\
P^i_3 & = & -A(-\tau_i (z-\tau_i w)^{k-1}(\overline{z}-\overline{\tau}_i w)^k
+|z-\tau_i w|^{2j_i}w^{k-j_i-1}\overline{w}^{k-j_i}+w^{K_i-1}\overline{w}^{K_i}).
\eea

Then 
$$ 
\frac{P_1}{\Phi_i}\xi+\frac{P^i_2}{\Phi_i}z+\frac{P^i_3}{\Phi_i}w \equiv 1.
$$

Away from the lines $L_1,\dots, L_m$ we need to glue these choices together. First we choose a partition of unity $\{\chi_i\}_{i= 1}^m $ such that each $\chi_i$ is constant in a conical neighborhood of each line $L_1,\dots,L_m$. Then we let
$$
\Phi= \xi-A\sum \chi_i |z-\tau_i w|^{2k}
-A\sum \chi_i |z-\tau_i w|^{2j_i}w^{2k-2j_i}
-A \sum \chi_i |w|^{2K_i}.
$$
Then we let
\bea
P_1 & = & 1\\
P_2 & = & -A\sum \chi_i(z-\tau_i w)^{k-1}(\overline{z}-\overline{\tau}_i \overline{w})^k\\
P_3 & = & \sum \chi_i P^3_i.
\eea

Finally we see that 
$$ 
\frac{P_1}{\Phi}\xi+\frac{P_2}{\Phi}z+\frac{P_3}{\Phi}w \equiv 1.
$$ 

Examples 1.2 and 1.4 from the introduction are covered by this case. As we can see from the other example domains in the introduction, we also need to deal with curves of higher order of contact, not just lines. In this case the $|w|^{2K_i}$'s need to be replaced by something much more complicated.

\subsection{Idea and first steps}

The main tool for handling exceptional curves is an algorithm developed by Forn{\ae}ss and Stens{\o}nes in \cite{FS:alg}. Each step in their algorithm will contribute terms to the function $\Phi$. As such, in contrast to Forn{\ae}ss and Stens{\o}nes, we have to keep track of every iteration step in the algorithm, which is why we choose to use the language of graph theory to describe the construction of $\Phi$.

We briefly recall the Forn{\ae}ss-Stens{\o}nes algorithm from \cite{FS:alg}. The algorithm is a three step process. We start with an essentially plurisubharmonic polynomial $r(z,w)$ without pluriharmonic terms. First, we find a complex line on which the lowest order terms vanish. Second, we change coordinates to move this line to an axis. Third we use the Newton diagram to find a weighted homogeneous polynomial coming from an extreme edge and find a curve where the lowest order term vanishes. This process repeats until the weighted homogeneous polynomial does not vanish along any curve. More precisely, if the lowest order terms vanish along the line $w_i=\tau_i z_i$, then we do the following change of coordinates $\tilde{z_i}=z_i$ and $\tilde{w_i}=w_i-\tau_i z_i$. Now we write $r$ in the new coordinates. There will be finitely many extreme edges with slope less than negative one. Among those, choose the one with the largest slope. This will give rise to a weighted homogeneous polynomial of degree $(a_i,b_i)$. Now we make the (singular) change of coordinates $(z_{i+1}, w_{i+1}) = (\tilde{z}^{\frac{1}{a_i}},\tilde{w}^{\frac{1}{b_i}})$. See \cite{FS:alg} for more details. If we look at $r$ in the new coordinates, we will get a new lowest order homogeneous polynomial, which will make a contribution to $\Phi$, similar to the ones above, but now in the new coordinates $(z_{i+1},w_{i+1})$. We need help with the book keeping, so we choose the language of graph theory.

Before carrying out the construction in details, we give a brief overview over how the graph is obtained from the Forn{\ae}ss-Stens{\o}nes algorithm.
We describe a rooted (undirected) tree $G=(V,E)$, where each node, except for the root, corresponds to a complex line obtained from a sequence of coordinate changes as described in \cite{FS:alg}. Since the algorithm in \cite{FS:alg} terminates after finitely many steps, an initial coordinate change ensures that none of the occurring complex lines is given as $\{v=0\}$ in the complex coordinates $(u,v)$ with respect to which the line is described in the algorithm.

We construct this tree by applying the algorithm from \cite{FS:alg}.

We initialize the tree with its root $(0,0)\in{V}$. If the lowest order homogeneous term of $R$ is not harmonic along any complex line through $0$, we stop. Otherwise, as mentioned previously, that term will be harmonic along only finitely many complex lines through $0$, say $L_{(1,1)},\dots{},L_{(1,l_1)}$, where $l_1\geq{1}$ and $L_{(1,i)}\neq{L_{(1,j)}}$ for $i\neq{j}$. We add nodes $(1,1),\dots{},(1,{l_1})\in{V}$ corresponding to these lines, as well as edges connecting each of these newly introduced nodes to the root, i.e.\ $\{(1,1),(0,0)\},{\dots},\{(1,{l_1}),(0,0)\}\in{E}$.

Now we consider the line $L_{(1,1)}$, which for suitable ${\tau}_{(1,1)}\in\mathbb{C}$ is given as $L_{(1,1)}=\{(z,w)\in\mathbb{C}^2\colon{z-{\tau}_{(1,1)}w=0}\}$. The real-valued polynomial $\widetilde{R}_{(1,1)}$ given by
\begin{align*}
\widetilde{R}_{(1,1)}(\widetilde{z},\widetilde{w})=R(\widetilde{z}+{\tau}_{(1,1)}\widetilde{w},\widetilde{w})
\end{align*}
is harmonic along the complex line $\{(\widetilde{z},\widetilde{w})\in\mathbb{C}^2\colon\widetilde{z}=0\}$. We consider the Newton diagram of $\widetilde{R}_{(1,1)}$. If there exists no extreme edge with slope $<-1$, we stop (if $1=l_1$) or we move on to considering the line $L_{(1,2)}$ (if $1<l_1$). Otherwise let $E_{(1,1)}$ be the extreme edge with the {\emph{largest}} slope among all extreme edges with slope $<-1$ (Caution: this is now an extreme edge in a Newton diagram and {\emph{not}} an edge of the graph). We then find positive integers $k_{(1,1)},l_{(1,1)}$ with $\gcd(k_{(1,1)},l_{(1,1)})=1$, such that the lowest-order homogeneous terms of $\widetilde{R}_{(1,1)}(\widetilde{z}^{k_{(1,1)}},\widetilde{w}^{l_{(1,1)}})$ are precisely given by
\begin{align*}
{\left(\widetilde{R}_{(1,1)}\right)}_{E_{(1,1)}}(\widetilde{z}^{k_{(1,1)}},\widetilde{w}^{l_{(1,1)}})\text{.}
\end{align*}
This leads us to defining a (singular) change of coordinates $\Psi_{(1,1)}\colon\mathbb{C}^2\to\mathbb{C}^2$ by
\begin{align*}
\Psi_{(1,1)}(u,v)=(u^{{k_{(1,1)}}}+{{{\tau}_{(1,1)}}}v^{{l_{(1,1)}}},v^{l_{(1,1)}})\text{.}
\end{align*}
We set
\begin{align*}
R_{(1,1)}:=R\circ\Psi_{(1,1)}
\end{align*}
and once again consider the complex lines through $0$, along which the lowest-order homogeneous term of $R_{(1,1)}$ is harmonic (note that said lowest-order homogeneous term ``comes from'' $E_{(1,1)}$). If there is no such line we stop (if $1=l_1$) or we move on to considering the line $L_{(1,2)}$ (if $1<l_1$). Otherwise there will be finitely many such lines, say $L_{(2,1)},{\dots},L_{(2,c_{(1,1)})}$, where $c_{(1,1)}\geq{1}$ and $L_{(2,i)}\neq{L}_{(2,j)}$ for $i\neq{j}$. We add nodes $(2,1),{\dots},(2,c_{(1,1)})\in{V}$ corresponding to these lines, as well as edges $\{(2,1),(1,1)\},{\dots},\{(2,c_{(1,1)}),(1,1)\}\in{E}$ connecting these newly introduced nodes to $(1,1)$.

We want to iterate the procedure we just described.

\subsection{Some notation}

As is obvious from the steps carried out thus far, this iteration would lead to some very inconvenient indexing. In order to avoid this, we will introduce some notation that lets us work around this issue. First, we define a function
\begin{align*}
\mathcal{A}\colon{V}\setminus\{0\}\to{V}
\end{align*}
(the ``ancestor function'') that assigns to each node (except for the root) its ``immediate ancestor'', i.e.\ the second node on the uniquely determined shortest path $((m,n),{\dots},(0,0))$ to the root: $\mathcal{A}(m,n)=(m-1,j)$ for the (uniquely determined) $j$ with $\{(m,n),(m-1,j)\}\in{E}$.\\
Secondly, for a node $(m,n)\in{V}$, we denote the set of all nodes having $(m,n)$ as immediate ancestor as $\mathcal{C}(m,n)$ (the ``children set''):
\begin{align*}
\mathcal{C}(m,n)=\{(m+1,l)\in{V}\colon\mathcal{A}(m+1,l)=(m,n)\}=\mathcal{A}^{-1}(\{(m,n)\})\text{.}
\end{align*}

\subsection{Setting up the graph} \label{sub:graph}
We now carry out the construction of the graph indicated above in a more formal manner. We initialize the rooted undirected tree $G=(V,E)$ with $E=\emptyset$ and $V=\{(0,0)\}$. We also introduce a set $D$, the set of nodes that have been ``dealt with''; we start with $D=\emptyset$. We set $R_{(0,0)}:=R$ and $\Psi_{(0,0)}:=\operatorname{id}$ and $k_{(0,0)}=l_{(0,0)}=1$.

If the lowest order homogeneous term of $R_{(0,0)}$ is not harmonic along any complex line through $0$, we add $(0,0)$ to $D$. Otherwise, as mentioned previously, said term will be harmonic along only finitely many complex lines through $0$, say $L_{(1,1)},\dots{},L_{(1,l_1)}$, where $l_1\geq{1}$ and $L_{(1,i)}\neq{L_{(1,j)}}$ for $i\neq{j}$. We add nodes $(1,1),\dots{},(1,{l_1})\in{V}$ corresponding to these lines, as well as edges connecting each of these newly introduced nodes to the root, i.e.\ $\{(1,1),(0,0)\},{\dots},\{(1,{l_1}),(0,0)\}\in{E}$. After having introduced these new nodes and edges, we consider $(0,0)$ to be ``dealt with'', so we add $(0,0)$ to $D$.

We now iterate the following procedure:

If the set $V\setminus{D}$ is nonempty (i.e.\ there exists a node that has not been ``dealt with''), we do the following: pick the node $(m,n)\in{V}\setminus{D}$ that is minimal with respect to the lexicographical order.\\
The node $(m,n)$ comes from a complex line through $0$,
\begin{align*}
L_{(m,n)}=\{(z,w)\in\mathbb{C}^2\colon{z-{\tau}_{(m,n)}w=0}\}\text{,}
\end{align*}
along which the lowest-order homogeneous term of $R_{\mathcal{A}(m,n)}$ is harmonic. The lowest-order homogeneous term of the real-valued polynomial $\widetilde{R}_{(m,n)}$ given by
\begin{align*}
\widetilde{R}_{(m,n)}(\widetilde{z},\widetilde{w})={R_{\mathcal{A}(m,n)}}(\widetilde{z}+{\tau}_{(m,n)}\widetilde{w},\widetilde{w})
\end{align*}
is harmonic along the complex line $\{(\widetilde{z},\widetilde{w})\in\mathbb{C}^2\colon\widetilde{z}=0\}$. We consider the Newton diagram of $\widetilde{R}_{(m,n)}$. If there exists no extreme edge with slope $<-1$, we add $(m,n)$ to $D$ and go back to the beginning of the iteration. Otherwise let $E_{(m,n)}$ be the extreme edge with the {\emph{largest}} slope among all extreme edges with slope $<-1$ (Caution: this is now an extreme edge in a Newton diagram and {\emph{not}} an edge of the graph). We then find positive integers $k_{(m,n)},l_{(m,n)}$ with $\gcd(k_{(m,n)},l_{(m,n)})=1$, such that the lowest-order homogeneous terms of $\widetilde{R}_{(m,n)}(\widetilde{z}^{k_{(m,n)}},\widetilde{w}^{l_{(m,n)}})$ are precisely given by
\begin{align*}
{\left(\widetilde{R}_{(m,n)}\right)}_{E_{(m,n)}}(\widetilde{z}^{k_{(m,n)}},\widetilde{w}^{l_{(m,n)}})\text{.}
\end{align*}
This leads us to defining a (singular) change of coordinates $\Psi_{(m,n)}\colon\mathbb{C}^2\to\mathbb{C}^2$ by
\begin{align*}
\Psi_{(m,n)}(u,v)=(u^{{k_{(m,n)}}}+{{{\tau}_{(m,n)}}}v^{{l_{(m,n)}}},v^{l_{(m,n)}})\text{.}
\end{align*}
We set
\begin{align*}
R_{(m,n)}:={R_{\mathcal{A}(m,n)}}\circ\Psi_{(m,n)}
\end{align*}
and once again consider the complex lines through $0$, along which the lowest-order homogeneous term of $R_{(m,n)}$ is harmonic (note that said lowest-order homogeneous term ``comes from'' $E_{(m,n)}$). If there is no such line, we add $(m,n)$ to $D$ and go back to the beginning of the iteration. Otherwise there is a finite positive number of such lines, say $c_{(m,n)}$. We set
\begin{align*}
b_{(m,n)}:=\begin{cases}
\max\{j\in\mathbb{Z}\colon{(m+1,j)\in{V}}\} & \text{ if }(m+1,1)\in{V}\text{,}\\
0 & \text{ otherwise.}
\end{cases}
\end{align*}
We now name these lines
\begin{align*}
L_{(m+1,b_{(m,n)}+1)},\dots{},L_{(m+1,b_{(m,n)}+c_{(m,n)})}\text{,}
\end{align*}
and add nodes
\begin{align*}
{(m+1,b_{(m,n)}+1)},\dots{},{(m+1,b_{(m,n)}+c_{(m,n)})}\in{V}\text{,}
\end{align*}
corresponding to these lines, as well es edges
\begin{align*}
\{{(m+1,b_{(m,n)}+1)},(m,n)\},\dots{},\{{(m+1,b_{(m,n)}+c_{(m,n)})},(m,n)\}\in{E}\text{,}
\end{align*}
connecting these newly introduced nodes to the node $(m,n)$. We now add $(m,n)$ to $D$ and go back to the beginning of the iteration. It follows from \cite{FS:alg}, that $V\setminus{D}$ will be empty after {\emph{finitely many}} steps. This completes the construction of the graph $G$.

\subsection{Definition of $\Phi$}

Let $A\gg{0}$ be a large enough constant (to be made precise). We will, for each node $(m,n)$, define a function $\mathcal{D}_{(m,n)}$ and set
\begin{align*}
{\Phi}({\xi},z,w):={\xi}-A\cdot\mathcal{D}_{(0,0)}(z,w)\text{.}
\end{align*}
We will do so using a kind of ``backwards induction'', where we work our way from the leaves of the tree towards the root. More precisely, $\mathcal{D}_{(m,n)}$ will be determined by the functions associated to the nodes in the children set $\mathcal{C}(m,n)$ of $(m,n)$.

We start by defining $\mathcal{D}_{(m,n)}$ for a node $(m,n)$, whose children set is empty (note that this is equivalent to saying that $(m,n)$ is a leaf, unless $V=\{(0,0)\}$, in which case of course $(m,n)=(0,0)$). We consider two separate cases.

The first case is the case where $(m,n)\neq{(0,0)}$ and there exists no extreme edge with slope $<-1$ in the Newton diagram of $\widetilde{R}_{(m,n)}$. Looking at the construction of $G$, we see that ${\Psi}_{(m,n)}$ has not been defined in this case. We define ${\Psi}_{(m,n)}$ by
\begin{align*}
{\Psi}_{(m,n)}(u,v)=(u+{\tau}_{(m,n)}v,v)\text{,}
\end{align*}
and $\mathcal{D}_{(m,n)}$ by
\begin{align*}
\mathcal{D}_{(m,n)}(u,v)=|v|^{2L\cdot{l}_{\mathcal{A}(m,n)}\cdot{l}_{\mathcal{A}^{\circ{2}}(m,n)}\cdot\dots\cdot{l}_{\mathcal{A}^{\circ{m-1}}(m,n)}}\text{,}
\end{align*}
where $\mathcal{A}^{\circ{j}}=\mathcal{A}\circ\dots\circ\mathcal{A}$ with $j$ copies of $\mathcal{A}$ and $2L$ is the type at $0$. It should be noted that the product ${l}_{\mathcal{A}(m,n)}\cdot{l}_{\mathcal{A}^{\circ{2}}(m,n)}\cdot\dots\cdot{l}_{\mathcal{A}^{\circ{m-1}}(m,n)}$ is the empty product if $m<2$.

Now, still in the setting where $\mathcal{C}(m,n)$ is empty, we consider the case where one of the following two assertions is true:
\begin{itemize}
	\item $(m,n)={(0,0)}$
	\item $(m,n)\neq{(0,0)}$ and there {\emph{does}} exist an extreme edge with slope $<-1$ in the Newton diagram of $\widetilde{R}_{(m,n)}$.
\end{itemize}
Note that $\Psi_{(m,n)}$ was already defined in the construction of $G$ in this case. We define
\begin{align*}
\mathcal{D}_{(m,n)}(u,v)={\Vert{(u,v)}\Vert}^{2d(m,n)}\text{,}
\end{align*}
where $2d(m,n)$ is the degree of the lowest-order homogeneous term of $R_{(m,n)}$.

Finally, we consider a node $(m,n)$ with $\mathcal{C}(m,n)\neq\emptyset$. Then the lowest-order homogeneous term of $R_{(m,n)}$ is harmonic precisely along the following complex lines through $0$:
\begin{align*}
L_{(m+1,l)}\text{, where }(m+1,l)\in\mathcal{C}(m,n)\text{.}
\end{align*}
We choose a partition of unity
\begin{align*}
\left({\chi_{(m+1,l)}}\right)_{(m+1,l)\in\mathcal{C}(m,n)}
\end{align*}
with respect to conical neighborhoods of the $L_{(m+1,l)}$'s such that the ${{\chi}_{(m+1,l)}}$'s are homogeneous of degree $0$ and set:
\begin{align*}
\mathcal{D}_{(m,n)}(u,v)=\sum_{(m+1,l)\in\mathcal{C}(m,n)}\chi & _{(m+1,l)}(u,v)\\
\cdot\bigg(
& |u-{\tau}_{(m+1,l)}v|^{2d(m,n)}\\
& +|u-{\tau}_{(m+1,l)}v|^{2d(m,n)-2q(m,n)}|v|^{2q(m,n)}\\
& +\mathcal{D}_{(m+1,l)}({\Psi_{(m+1,l)}}^{-1}(u,v))\bigg)
\end{align*}
Here, $2d(m,n)$ is the degree of the lowest-order homogeneous term of $R_{(m,n)}$ and $2q(m,n)$ is the largest degree in $v,\overline{v}$ attained in the extreme set of the Newton diagram of $R_{(m,n)}$ corresponding to slope $-1$ (this can be an extreme point or an extreme edge). Furthermore, we point out that it is not a problem that the coordinate changes $\Psi_{(m+1,l)}$ are singular in general, since we are multiplying with an appropriate cut-off function $\chi_{(m+1,l)}$.

\section{Setting up $P_1$, $P_2$ and $P_3$} \label{section:ps}
As mentioned previously, we choose $P_1\equiv 1$. Furthermore, $P_2$ and $P_3$ will only depend on $z$ and $w$, i.e.\ not on $\xi$. Our goal is to split $F$, $\Phi = \xi -F$, into terms divisible by $z$ and terms divisible by $w$ to obtain $P_2$ and $P_3$. Up to compositions of singular coordinate changes and multiplication with products of cut off functions, $F$ is a sum of terms of the following forms:
\begin{itemize}
	\item{$|v|^{2L\cdot{l}_{\mathcal{A}(m,n)}\cdot{l}_{\mathcal{A}^{\circ{2}}(m,n)}\cdot\dots\cdot{l}_{\mathcal{A}^{\circ{m-1}}(m,n)}}$,}
	\item{${\Vert{(u,v)}\Vert}^{2d(m,n)}$,}
	\item{$|u-{\tau}_{(m+1,l)}v|^{2d(m,n)}+|u-{\tau}_{(m+1,l)}v|^{2d(m,n)-2q(m,n)}|v|^{2q(m,n)}$.} 
\end{itemize}
Hence it is enough to treat each of these terms separately, while of course accounting for the singular coordinate changes.

We now fix a node $(m,n)$ for the remainder of this section. We start with the first term. Noting that, in the corresponding coordinates, we have $v^{{l}_{\mathcal{A}(m,n)}\cdot{l}_{\mathcal{A}^{\circ{2}}(m,n)}\cdot\dots\cdot{l}_{\mathcal{A}^{\circ{m-1}}(m,n)}}=w$, we readily decompose as follows:
\begin{align*}
& |v|^{2L\cdot{l}_{\mathcal{A}(m,n)}\cdot{l}_{\mathcal{A}^{\circ{2}}(m,n)}\cdot\dots\cdot{l}_{\mathcal{A}^{\circ{m-1}}(m,n)}}\\
=& z\cdot{0}+w\cdot{v^{(L-1)\cdot{l}_{\mathcal{A}(m,n)}\cdot{l}_{\mathcal{A}^{\circ{2}}(m,n)}\cdot\dots\cdot{l}_{\mathcal{A}^{\circ{m-1}}(m,n)}}}\overline{v^{L\cdot{l}_{\mathcal{A}(m,n)}\cdot{l}_{\mathcal{A}^{\circ{2}}(m,n)}\cdot\dots\cdot{l}_{\mathcal{A}^{\circ{m-1}}(m,n)}}}\text{.}
\end{align*}

In order to deal with the remaining two terms, we simplify notation a bit: we set $d_m :=d(m,n)$ and, when dealing with the last term, $q_m:=q(m,n)$. Furthermore we write $V_m :=(m,n)$, $V_{m-1}:=\mathcal{A}(m,n)$, \dots, $V_1:=\mathcal{A}^{\circ{m-1}}(m,n)$ and of course $V_0 :=(0,0)$. We denote the coordinates corresponding to $V_j$ as $(z_j,w_j)$ and let $\tau_j :=\tau_{V_j}$; in particular we have $(u,v)=(z_m , w_m )$ and $(z,w)=(z_0,w_0)$. The exponents from the coordinate changes are denoted as $\alpha_j:=k_{V_j}$ and $\beta_j:=l_{V_j}$, i.e.\ we have $w_{j}^{\beta_j}=w_{j-1}$ and $z_{j}^{\alpha_j}=z_{j-1}-\tau_{j} w_{j-1}$ for $j=1,\dots ,m$.

For the remaining two terms we notice that, away from $(0,0)$, they can be trivially rewritten as:
\begin{itemize}
    \item $\frac{{\Vert{(z_m,w_m)}\Vert}^{2d_m}}{|z_m|^{2d_m}+|w_m|^{2d_m}}\cdot{(|z_m|^{2d_m}+|w_m|^{2d_m})}$,
    \item $\frac{|z_m-{\tau}_{(m+1,l)}w_m|^{2d_m}+|z_m-{\tau}_{(m+1,l)}w_m|^{2d_m -2q_m}|w_m|^{2q_m}}{|z_m|^{2d_m}+|w_m|^{2d_m}}\cdot{(|z_m|^{2d_m}+|w_m|^{2d_m})}$.
\end{itemize}
But, away from $(0,0)$, both of these fractions are smooth bounded functions taking values in the non-negative reals. So, away from $(0,0)$, both of the remaining terms are of the form
\begin{align*}
    f_m\cdot (|z_m|^{2d_m}+|w_m|^{2d_m}) \text{,}
\end{align*}
for some smooth bounded function $f_m$, defined away from $(0,0)$ and taking values in $\mathbb{R}_{\geq 0}$. Of course such a function does not necessarily extend continuously to $(0,0)$, but the product of such a function with something small enough will, e.g. $f_m\cdot z_m$ or $f_m\cdot \overline{w_m}$. That is the idea we will use in the last step to obtain the desired splitting.  

Since the cut-off functions from the construction of $\Phi$ of course also occur in the expressions for $P_2$ and $P_3$, we again do not have to worry about the coordinate changes being singular. Because of this, we will ignore the singularity of the coordinate changes for the remainder of this section. We have, away from $(0,0)$:
\begin{align*}
    f_m\cdot (|z_m|^{2d_m} & +|w_m|^{2d_m})\\
    = & f_m\cdot |z_{m-1}-\tau_m w_{m-1}|^{\frac{2d_m}{\alpha_m}}+f_m\cdot |w|^{\frac{2d_m}{\beta_m\cdot\dots\cdot\beta_1}}\\
    = & f_m\cdot \frac{|z_{m-1}-\tau_m w_{m-1}|^{\frac{2d_m}{\alpha_m}}}{|z_{m-1}|^{\frac{2d_m}{\alpha_m}}+|w_{m-1}|^{\frac{2d_m}{\alpha_m}}}\cdot{(|z_{m-1}|^{\frac{2d_m}{\alpha_m}}+|w_{m-1}|^{\frac{2d_m}{\alpha_m}})}+f_m\cdot |w|^{\frac{2d_m}{\beta_m\cdot\dots\cdot\beta_1}}\\
    = & f_m\cdot f_{m-1}\cdot{(|z_{m-1}|^{\frac{2d_m}{\alpha_m}}+|w_{m-1}|^{\frac{2d_m}{\alpha_m}})}+f_m\cdot |w|^{\frac{2d_m}{\beta_m\cdot\dots\cdot\beta_1}}\\
    = & f_m\cdot f_{m-1}\cdot{|z_{m-1}|^{\frac{2d_m}{\alpha_m}}+f_m\cdot f_{m-1}\cdot |w|^{\frac{2d_m}{\alpha_m\cdot\beta_{m-1}\cdot\dots\cdot\beta_1}}}+f_m\cdot |w|^{\frac{2d_m}{\beta_m\cdot\dots\cdot\beta_1}}\text{,}
\end{align*}
where $f_{m-1}$ is again some smooth bounded function, defined away from $(0,0)$ and taking values in $\mathbb{R}_{\geq 0}$. Continuing inductively, we find smooth bounded functions $f_{m-2},\dots ,f_0$, defined away from $(0,0)$ and taking values in $\mathbb{R}_{\geq 0}$, such that:
\begin{align*}
    f_m\cdot (|z_m|^{2d_m}+|w_m|^{2d_m})= & f_m\cdot\dots\cdot f_0\cdot |z|^{\frac{2d_m}{\alpha_m\cdot\dots\cdot\alpha_1}}\\
    & + f_m\cdot\dots\cdot f_0\cdot |w|^{\frac{2d_m}{\alpha_m\cdot\dots\cdot\alpha_1}}\\
    & + \dots\\
    & + f_m\cdot\dots\cdot f_j\cdot |w|^{\frac{2d_m}{\alpha_m\cdot\dots\cdot\alpha_{j+1}\cdot\beta_j\cdot\dots\cdot\beta_1}}\\
    & + \dots\\
    & +f_m\cdot |w|^{\frac{2d_m}{\beta_m\cdot\dots\cdot\beta_1}}\text{.}
\end{align*}
This implies that $f_m\cdot (|z_m|^{2d_m}+|w_m|^{2d_m})$ is a (finite) sum of terms of the form
\begin{align*}
    g\cdot |x|^r\text{,}
\end{align*}
where $r$ is a positive real number, $x$ can be either $z$ or $w$, and $g$ is again some smooth bounded function, defined away from $(0,0)$ and taking values in $\mathbb{R}_{\geq 0}$. If $r>1$, then we can write
\begin{align*}
    g\cdot |x|^r=\left(g\cdot\frac{|x|}{x}\cdot |x|^{r-1}\right)\cdot x\text{.}
\end{align*}
Now $g|x|/x$ is smooth and bounded away from $(0,0)$, so (since $r-1>0$), the function $g\cdot\frac{|x|}{x}\cdot |x|^{r-1}$ extends continuously with value $0$ to $(0,0)$. Then, depending on whether $x$ is $z$ or $w$, we absorb the corresponding term into $P_2$ respectively $P_3$.

The only thing left to do is to show that the occurring exponents are larger than $1$, i.e.\ we have to show that ${2d_m}>{\alpha_m\cdot\dots\cdot\alpha_{j+1}\cdot\beta_j\cdot\dots\cdot\beta_1}$ for $j=0,1,{\dots},m$. But, since all $\alpha_i$, $\beta_i$ come from extreme edges with slope $\leq -1$, we clearly have $\alpha_i\geq\beta_i\geq 1$ for all $i$, i.e.\ we only have to show that $2d_m > \alpha_m\cdot\dots\cdot\alpha_1$. This, however, follows immediately by tracing through the algorithm described in the previous section.

\section{Developing plurisubharmonic weights} \label{section:weights}
In the next section, we will need weights coming from the algorithm in the use of H{\"o}rmander's theorem. We develop these weights in this section.

For each node $(m,n)$ of $G$, whose children set $\mathcal{C}(m,n)$ is empty, we define a function $\rho_{(m,n)}$, which will appear in the definition of the weight for the $(0,2)$-form $\omega$. We fix such a node $(m,n)$ for the remainder of this section. Much like in the definition of $\Phi$, we trace our way back from $(m,n)$ to the root and add terms along the way. We will have
\begin{align*}
\rho_{(m,n)}(z,w,{\xi})=\log\left({|{\xi}|+|{\xi}|^2+H_{(m,n)}(z,w)}\right)\text{,}
\end{align*}
for a real valued function $H_{(m,n)}\geq{0}$ that will be described below. If $(m,n)=(0,0)$, then $V=\{(0,0)\}$ and we set $H_{(0,0)}:=\mathcal{D}_{(0,0)}$. So assume $(m,n)\neq{(0,0)}$ for the remainder of this section, i.e.\ $V\neq{(0,0)}$.

The function $H_{(m,n)}$ will look like the function $\mathcal{D}_{(0,0)}$, except for the fact that all the occurring cut-off functions are replaced by either $1$ or $0$, depending on whether the node of consideration lies on the uniquely determined shortest path from $(m,n)$ to $(0,0)$. In other words, $H_{(m,n)}$ is defined like $\mathcal{D}_{(0,0)}$, but we do not have cut-off functions and only take the nodes corresponding to the shortest path between $(m,n)$ and the root in $G$. In the language of \cite{FS:alg}, this path in $G$ corresponds to a mother curve. Since we do not have cut-off functions to take care of the fact that the coordinate changes are singular, we sum over all the preimages and normalize to prevent problems induced by multiplicity. We carry this out formally:

For each node of the form $\mathcal{A}^{\circ{j}}(m,n)$, where $j\in\{0,1,{\dots},m\}$, we define a function $\mathcal{E}_{\mathcal{A}^{\circ{j}}(m,n)}^{(m,n)}$ and set $H_{(m,n)}:=\mathcal{E}_{\mathcal{A}^{\circ{m}}(m,n)}^{(m,n)}=\mathcal{E}_{(0,0)}^{(m,n)}$.

We set
\begin{align*}
\mathcal{E}_{\mathcal{A}^{\circ{0}}(m,n)}^{(m,n)}=\mathcal{E}_{(m,n)}^{(m,n)}:=\mathcal{D}_{(m,n)}\text{,}
\end{align*}
(recall that $(m,n)$ is a leaf) and for $j\in\{0,{\dots},m-1\}$. Finally we define $\mathcal{E}_{\mathcal{A}^{\circ{j+1}}(m,n)}^{(m,n)}(u,v)$ using an average as follows
\begin{align*}
\mathcal{E}_{\mathcal{A}^{\circ{j+1}}(m,n)}^{(m,n)} & (u,v):=\\
& |u-{\tau}_{\mathcal{A}^{\circ{j}}(m,n)}v|^{2d(\mathcal{A}^{\circ{j+1}}(m,n))}\\
& +|u-{\tau}_{\mathcal{A}^{\circ{j}}(m,n)}v|^{2d(\mathcal{A}^{\circ{j+1}}(m,n))-2q(\mathcal{A}^{\circ{j+1}}(m,n))}|v|^{2q(\mathcal{A}^{\circ{j+1}}(m,n))}\\
& +\frac{1}{\operatorname{card}({\Psi_{\mathcal{A}^{\circ{j}}(m,n)}}^{-1}\{(u,v)\})}\sum_{(\widetilde{u},\widetilde{v})\in{\Psi_{\mathcal{A}^{\circ{j}}(m,n)}}^{-1}\{(u,v)\}}\mathcal{E}_{\mathcal{A}^{\circ{j}}(m,n)}^{(m,n)}(\widetilde{u},\widetilde{v})\text{.}
\end{align*}

\section{$L^2$ estimates for the $(2,0)$-form $\omega$} \label{section:omegaestimate}

We start with a lemma that will simplify the estimates.
\begin{lemma}
	\label{onesummandenough}
	Let $m$ be a positive integer and let $T,S\in\mathbb{C}^{m\times{m}}$ be Hermitian matrices, such that $S$ is positive semidefinite and $T$ is positive definite. Then we have for all $v\in\mathbb{C}^m$:
	\begin{align*}
	\overline{v}^t{{(T+S)}^{-1}}{v}\leq\overline{v}^t{{T}^{-1}}{v}\text{.}
	\end{align*}
\end{lemma}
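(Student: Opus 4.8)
The plan is to reduce the inequality to a statement about a single positive semidefinite perturbation and then invoke the operator-monotonicity of the map $X\mapsto X^{-1}$ on positive definite matrices. First I would observe that since $T$ is positive definite and $S$ is positive semidefinite, $T+S$ is positive definite, so both $T^{-1}$ and $(T+S)^{-1}$ are well defined, Hermitian, and positive definite, and the claimed inequality is precisely the statement $(T+S)^{-1}\preceq T^{-1}$ in the Loewner order.

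The cleanest route is to use the substitution that symmetrizes the problem: write $T=T^{1/2}T^{1/2}$ with $T^{1/2}$ the positive definite square root, and set $\widetilde{S}:=T^{-1/2}S\,T^{-1/2}$, which is again Hermitian positive semidefinite. Then
\begin{align*}
(T+S)^{-1}=\left(T^{1/2}(I+\widetilde{S})T^{1/2}\right)^{-1}=T^{-1/2}(I+\widetilde{S})^{-1}T^{-1/2}\text{,}
\end{align*}
so for $v\in\mathbb{C}^m$, putting $y:=T^{-1/2}v$, the inequality $\overline{v}^t(T+S)^{-1}v\leq\overline{v}^tT^{-1}v$ becomes $\overline{y}^t(I+\widetilde{S})^{-1}y\leq\overline{y}^ty=\overline{y}^tI\,y$. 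Since $y$ ranges over all of $\mathbb{C}^m$ as $v$ does, it suffices to prove $(I+\widetilde{S})^{-1}\preceq I$ for an arbitrary Hermitian positive semidefinite $\widetilde{S}$. This last step is elementary: diagonalize $\widetilde{S}=U^{*}DU$ with $U$ unitary and $D=\operatorname{diag}(\lambda_1,\dots,\lambda_m)$, $\lambda_i\geq 0$; then $(I+\widetilde{S})^{-1}=U^{*}\operatorname{diag}\big((1+\lambda_1)^{-1},\dots,(1+\lambda_m)^{-1}\big)U$, and each eigenvalue satisfies $(1+\lambda_i)^{-1}\leq 1$, hence $\overline{y}^t(I+\widetilde{S})^{-1}y=\sum_i (1+\lambda_i)^{-1}|(Uy)_i|^2\leq\sum_i|(Uy)_i|^2=\overline{y}^ty$.

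There is no serious obstacle here; the only point requiring minor care is justifying the existence and properties of $T^{1/2}$ and its invertibility (standard spectral theorem for Hermitian matrices) and noting that $\widetilde{S}$ inherits positive semidefiniteness from $S$ because $\overline{w}^t\widetilde{S}w=\overline{(T^{-1/2}w)}^tS(T^{-1/2}w)\geq 0$. An even shorter variant avoids the square root: one can write $(T+S)^{-1}$ and $T^{-1}$ and use $T^{-1}-(T+S)^{-1}=(T+S)^{-1}\big((T+S)-T\big)T^{-1}=(T+S)^{-1}S\,T^{-1}$; but turning this product of non-commuting positive matrices into a manifestly positive semidefinite expression still essentially requires the congruence trick above, so I would present the square-root argument as the primary proof. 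For the application in the next section, the relevant consequence is that when estimating $\langle A^{-1}v,v\rangle$ in H\"ormander's theorem (Theorem \ref{thm:hor}), one may discard any positive semidefinite summand of the complex Hessian $A$ and only retain a convenient lower bound, which is exactly what the lemma provides.
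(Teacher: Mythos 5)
Your proof is correct and is essentially the same argument as the paper's: the paper reduces the inequality by writing down a Cholesky factorization of $T^{-1}$ and calculating, which is the same congruence trick you perform with the positive square root $T^{1/2}$, reducing to $(I+\widetilde{S})^{-1}\preceq I$. The choice of factor (Cholesky versus $T^{1/2}$) is immaterial, so no further comparison is needed.
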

\begin{proof}
	This follows by writing down a Cholesky decomposition for $T^{-1}$ and calculating.
\end{proof}	

Now we want to solve the equation $\overline\partial u  = \omega$ in an $L^2$ space using the following theorem that allows us to gain more regularity.

\begin{thm} (H\"ormander, Demailly)
Let $\rho$ be a plurisubharmonic function on $D\subset \mathbb C^3$, pseudoconvex, $v$ a $\overline\partial-$
closed $(0,2)-$form. Then there exists a $(0,1)-$form such that $\overline\partial u=v$ and

$$
\int_D |u|^2 e^{-\rho} \leq C \int_D <A^{-1}v, v> e^{-\rho}\text{,}
$$
where 
\[
A=
\begin{bmatrix}
\rho_{{\xi}\overline{{\xi}}} +  \rho_{{z}\overline{{z}}}              & \rho_{{w}\overline{{z}}}  & -\rho_{{w}\overline{{\xi}}}  \\
\rho_{{z}\overline{{w}}}  & \rho_{{\xi}\overline{{\xi}}} +\rho_{{w}\overline{{w}}}  & \rho_{{z}\overline{{\xi}}} \\
-\rho_{{\xi}\overline{{w}}}  & \rho_{{\xi}\overline{{z}}} & \rho_{{z}\overline{{z}}}+ \rho_{{w}\overline{{w}}} 
\end{bmatrix}\text{.}
\]

\end{thm}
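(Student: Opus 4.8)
The plan is to derive this weighted $L^2$-existence theorem for the $(0,2)$-form $v$ from the standard Hörmander--Demailly machinery on the pseudoconvex domain $D\subset\mathbb{C}^3$, with the specific matrix $A$ arising as the curvature-type operator acting on $(0,2)$-forms. First I would recall that for a plurisubharmonic weight $\rho$ and the operator $\overline\partial\colon L^2_{(0,1)}(D,e^{-\rho})\to L^2_{(0,2)}(D,e^{-\rho})$, the basic a priori estimate (Morrey--Kohn--Hörmander, in Demailly's form) gives, for any $\overline\partial$-closed $(0,2)$-form $v$ lying in the range condition, a solution $u$ of $\overline\partial u=v$ with $\int_D|u|^2e^{-\rho}\le\int_D\langle B^{-1}v,v\rangle e^{-\rho}$, where $B$ is the operator on $(0,2)$-forms induced by the complex Hessian $\bigl(\rho_{j\overline k}\bigr)$ via the commutator $[i\partial\overline\partial\rho,\Lambda]$. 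So the real content is purely linear algebra: identify $B$ acting on $(0,2)$-forms in $\mathbb{C}^3$ with the displayed $3\times 3$ matrix $A$.

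The key computation: in $\mathbb{C}^3$ the space of $(0,2)$-forms is $3$-dimensional, with basis $d\overline\xi\wedge d\overline w$, $d\overline\xi\wedge d\overline z$ (with a sign), $d\overline z\wedge d\overline w$ — i.e.\ indexed by the complementary single indices $z$, $w$, $\xi$ respectively. On $(0,q)$-forms the curvature operator $[i\partial\overline\partial\rho,\Lambda]$ acts, in an orthonormal frame diagonalizing the Hessian, with eigenvalue equal to the sum of the eigenvalues over the index set of each basis form; in a general frame one gets, writing $H=(\rho_{j\overline k})$, the operator on $(0,2)$-forms whose matrix entry between the basis element "omit $a$" and "omit $b$" is $(\operatorname{tr}H)\delta_{ab} - H_{ba}$ restricted appropriately — more precisely $\bigl(\operatorname{tr}H\bigr)\delta_{ab}-\overline{H_{ab}}$ up to the identification of $(0,2)$ with $(2,0)$ via conjugation. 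Carrying this out with the ordering $(a,b)$ running over $(z,w,\xi)$ produces exactly the claimed matrix: the diagonal entries become $\rho_{\xi\overline\xi}+\rho_{z\overline z}$, $\rho_{\xi\overline\xi}+\rho_{w\overline w}$, $\rho_{z\overline z}+\rho_{w\overline w}$ (each being $\operatorname{tr}H$ minus the "omitted" eigenvalue), and the off-diagonal entries are the corresponding Hessian cross-terms $\rho_{z\overline w}$, $\rho_{w\overline z}$, $\pm\rho_{w\overline\xi}$, $\pm\rho_{z\overline\xi}$ with signs dictated by the wedge-product orientation conventions on the chosen basis. I would present this as: pick the frame, write $v=\sum v_a\,e_a$ with $e_a$ the basis $(0,2)$-forms, compute $\langle[i\partial\overline\partial\rho,\Lambda]v,v\rangle$ directly, and read off $A$.

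Then the theorem follows by invoking the hypothesis that $A$ is pointwise positive definite (implicit in the statement, since $A^{-1}$ appears) so that $\langle A^{-1}v,v\rangle$ makes sense, checking that $v$ being $\overline\partial$-closed places it in the relevant subspace, and applying the existence half of Hörmander's theorem (Theorem~\ref{thm:hor} above, in its general $(0,q)$ form with the curvature operator $A$) together with a standard density/limiting argument to remove any a priori finiteness assumption on the right-hand side. The main obstacle I expect is not analytic but bookkeeping: getting the signs and the placement of barred versus unbarred cross-derivatives in the off-diagonal entries of $A$ to match the stated matrix exactly, since these depend delicately on the chosen ordering and orientation of the basis $(0,2)$-forms and on the convention identifying the curvature operator on $(0,2)$-forms. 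I would fix conventions once at the start (e.g.\ order the missing index as $z<w<\xi$ and orient $e_z=d\overline w\wedge d\overline\xi$ etc.) and verify the $3\times 3$ entries against the statement, noting that the asymmetry $\rho_{w\overline z}$ in row $1$ versus $\rho_{z\overline w}$ in row $2$ reflects that $A$ is the matrix of a Hermitian operator written in a possibly non-orthonormal frame, so $A$ itself need not be a symmetric array of scalars but is Hermitian in the appropriate sense.
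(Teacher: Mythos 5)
Your proposal is correct, and it supplies exactly the argument the paper leaves implicit: the paper states this theorem without proof, as an imported result of H\"ormander and Demailly, and the standard derivation is what you sketch --- invoke the general weighted $L^2$ existence theorem for $(0,q)$-forms on a pseudoconvex domain and identify $A$ with the curvature quadratic form $\sum_{m}\sum_{j,k}\rho_{j\overline{k}}\,v_{jm}\overline{v_{km}}$ on $(0,2)$-forms. Writing $v=v_{\xi z}\,d\overline{\xi}\wedge d\overline{z}+v_{\xi w}\,d\overline{\xi}\wedge d\overline{w}+v_{zw}\,d\overline{z}\wedge d\overline{w}$ and expanding that form in the ordered components $(v_{\xi z},v_{\xi w},v_{zw})$ reproduces the displayed matrix exactly, including the corner signs $-\rho_{w\overline{\xi}}$ and $-\rho_{\xi\overline{w}}$, so the bookkeeping step you defer does close. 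Two small caveats: the operator you want is the quadratic form in H\"ormander's identity for $(0,2)$-forms (equivalently the commutator acting on $(3,2)$-forms after twisting by $d\xi\wedge dz\wedge dw$), not literally $[i\partial\overline{\partial}\rho,\Lambda]$ on $(0,2)$-forms, whose eigenvalues would carry an extra $-\operatorname{tr}(\rho_{j\overline{k}})$; and for a weight that is only plurisubharmonic, $A$ may be merely positive semidefinite, so the inequality must be read in Demailly's sense (right-hand side finite, $v$ pointwise in the range of $A$) --- harmless in this paper, since the weight $\Psi_0$ to which the theorem is applied contains a strictly plurisubharmonic term.
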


In the setting of the theorem, an explicit calculation gives:
\begin{align*}
A^{-1}=\frac{1}{\det A}\cdot M\text{,}
\end{align*}
where
\begin{align*}
M& =
\begin{bmatrix}
(\rho_{{\xi}\overline{{\xi}}}+\rho_{{w}\overline{{w}}})
(\rho_{{z}\overline{{z}}}+\rho_{{w}\overline{{w}}})-|\rho_{{\xi}\overline{{z}}}|^{2} &  -\rho_{{z}\overline{{w}}}(\rho_{{z}\overline{{z}}}+\rho_{{w}\overline{{w}}})-\rho_{{\xi}\overline{{w}}}\rho_{{z}\overline{{\xi}}} & \rho_{{z}\overline{{w}}}\rho_{{\xi}\overline{{z}}}+\rho_{{\xi}\overline{{w}}}(\rho_{{\xi}\overline{{\xi}}}+\rho_{{w}\overline{{w}}})\\
-\rho_{{w}\overline{{z}}}(\rho_{{z}\overline{{z}}}+\rho_{{w}\overline{{w}}})-\rho_{{\xi}\overline{{z}}}\rho_{{w}\overline{{\xi}}} &
(\rho_{{\xi}\overline{{\xi}}}+\rho_{{z}\overline{{z}}})
(\rho_{{z}\overline{{z}}}+\rho_{{w}\overline{{w}}})-|\rho_{{\xi}\overline{{w}}}|^{2} &  -\rho_{{\xi}\overline{{z}}}(\rho_{{\xi}\overline{{\xi}}}+\rho_{{z}\overline{{z}}})-\rho_{{\xi}\overline{{w}}}\rho_{{w}\overline{{z}}}\\
\rho_{{w}\overline{{z}}}\rho_{{z}\overline{{\xi}}}+\rho_{{w}\overline{{\xi}}}(\rho_{{\xi}\overline{{\xi}}}+\rho_{{w}\overline{{w}}}) & 
-\rho_{{z}\overline{{\xi}}}(\rho_{{\xi}\overline{{\xi}}}+\rho_{{z}\overline{{z}}})-\rho_{{w}\overline{{\xi}}}\rho_{{z}\overline{{w}}} & 
(\rho_{{\xi}\overline{{\xi}}}+\rho_{{z}\overline{{z}}})
(\rho_{{\xi}\overline{{\xi}}}+\rho_{{w}\overline{{w}}})-|\rho_{{z}\overline{{w}}}|^{2}
\end{bmatrix}\text{,}
\end{align*}
and
\begin{align*}
\det A& =(\rho_{{\xi}\overline{{\xi}}}+\rho_{{z}\overline{{z}}})\left[\rho_{{\xi}\overline{{\xi}}}\rho_{{z}\overline{{z}}}-|\rho_{{z}\overline{{\xi}}}|^{2} \right] +
(\rho_{{w}\overline{{w}}}+\rho_{{z}\overline{{z}}})\left[\rho_{{z}\overline{{z}}}\rho_{{w}\overline{{w}}}-|\rho_{{z}\overline{{w}}}|^{2} \right]+
(\rho_{{\xi}\overline{{\xi}}}+\rho_{{w}\overline{{w}}})\left[\rho_{{\xi}\overline{{\xi}}}\rho_{{w}\overline{{w}}}-|\rho_{{w}\overline{{\xi}}}|^{2} \right]\\
& \phantom{=}+{2} \text{ Levi Det}(\rho)+L_{{\xi}{z}}(\rho, (\rho_{{w}\overline{{z}}},\rho_{\overline{{\xi}}{w}}))+L_{{\xi}{w}}(\rho, (\rho_{{w}\overline{{z}}},\rho_{\overline{{\xi}}{z}}))+L_{{z}{w}}(\rho, (\rho_{{\xi}\overline{{w}}},\rho_{\overline{{\xi}}{z}}))\text{.}
\end{align*}

We want to apply this theorem with $\Psi_0$, $\Omega_0^{**}$, $\omega$ in the roles of $\rho$, $D$, $v$ respectively, where $\Omega_0^{**}$ is pseudoconvex and contains $\overline{\Omega}\setminus\{0\}$. Moreover, locally, it is an intermediate bumping in the sense that $\overline{\Omega}\setminus\{0\}\subseteq\Omega_0^{**}\subseteq\Omega_0^{*}$ and both inclusions denote a bumping to the type of $\Omega$ at $0$. More precisely, $\Omega_0^{**}$ is obtained by ``subtracting'' half the bumping function from the defining function of $\Omega$.

Furthermore,
\begin{align*}
\omega =& 2\frac{\overline{\partial}{P_2}\wedge\overline{\partial}{P_3}}{{\Phi}^3}\text{,}\\
\Psi_0 =& -\left({\frac{1}{k}+\epsilon{J}}\right)\cdot\log{(\operatorname{dist}({\cdot},\operatorname{b}\Omega_0^{*}))}+d(|{\xi}|^2+|z|^2+|w|^2)\\
& +\sum_{(m,n)\in{V}\colon\mathcal{C}(m,n)=\emptyset}^{}{\epsilon\rho_{(m,n)}}\text{,}
\end{align*}
where $J$ is the number of nodes with empty children set (i.e.\ the number of leaves, if $V\neq\{(0,0)\}$) and $d,{\epsilon}>0$ are very small. It should be pointed out that the term involving $d$ is only included to ensure invertibility resp.\ positivity in the appropriate places, and will not play a big role in the following estimates. Recall furthermore that for the nodes $(m,n)$ with empty children set we have
\begin{align*}
\rho_{(m,n)}({\xi},z,w)=\log\left({|{\xi}|+|{\xi}|^2+H_{(m,n)}(z,w)}\right)\text{.}
\end{align*}

If
\begin{align*}
\Psi_0={\rho}+\Psi_0'\text{,}
\end{align*}
where $\rho$ is strictly plurisubharmonic and $\Psi_0'$ is plurisubharmonic, then the matrix $A_\rho$ is a positive definite Hermitian matrix and $A_{\Psi_0}-A_\rho$ is a positive semidefinite Hermitian matrix. Lemma \ref{onesummandenough} and a calculation then immediately give:
\begin{align*}
|<{A_{\Psi_0}}^{-1}{\omega},{\omega}>| e^{-\Psi_0} \leq |\omega|^2 e^{-\Psi_0} \cdot (\operatorname{I}+\operatorname{II}+\operatorname{III})\text{,}
\end{align*}
where
\begin{align*}
\operatorname{I} & =\frac{\rho_{\xi\overline\xi}}{\rho_{\xi\overline\xi}\rho_{z\overline{z}}-|\rho_{\xi\overline{z}}|^2}\text{,}\\
\operatorname{II} & =\frac{\rho_{\xi\overline\xi}}{\rho_{\xi\overline\xi}\rho_{w\overline{w}}-|\rho_{\xi\overline{w}}|^2}\text{,}\\
\operatorname{III} & =\frac{1}{\rho_{z\overline{z}}+\rho_{w\overline{w}}}\text{.}
\end{align*}
We have $\operatorname{III}\leq\operatorname{I}$ (since $\rho$ is strictly plurisubharmonic), so it suffices to estimate $\operatorname{I}$ and $\operatorname{II}$.

Now assume that $\rho$ is of the following form:
\begin{align*}
\rho ({\xi},z,w)=\epsilon\cdot\log\left({|{\xi}|+|{\xi}|^2+H(z,w)}\right)\text{,}
\end{align*}
where $H\geq 0$ is a smooth real-valued function with the property that $H_{z\overline z}$ and $HH_{z\overline{z}}-|H_z|^2$ (as well as the analogous expressions with $w$ instead of $z$) are non-negative. Setting $h({\xi},z,w):={|{\xi}|+|{\xi}|^2+H(z,w)}$, a direct computation gives:
\begin{align*}
\operatorname{I} & \lesssim\frac{h^2}{H_{z\overline{z}}\cdot (|{\xi}|+|{\xi}|^2+H)}+\frac{h^2}{HH_{z\overline{z}}-|H_z|^2}\\
& \lesssim\frac{h^2}{HH_{z\overline{z}}-|H_z|^2}\text{,}
\end{align*}
and analogously we of course get
\begin{align*}
\operatorname{II}\lesssim\frac{h^2}{HH_{w\overline{w}}-|H_w|^2}\text{,}
\end{align*}
where the occurring constants depend on $\epsilon$.

\subsection{Estimating the integral}
We are integrating over $\Omega_0^{**}$. We partition the domain of integration into finitely many sets in accordance with how $\Phi$ was defined.

Intuitively speaking, we do the following: if the lowest order homogeneous term of $R$ is not harmonic along any complex line through $0$, the partition is simply given by the domain itself. Otherwise, we remove small conical neighborhoods of the complex lines through $0$, along which $R$ is harmonic (namely $L_{(1,1)},\dots{},L_{(1,l_1)}$). The resulting set gives the first set of the partition.\\
Then, for every complex line $L_{(1,j)}$ such that $(1,j)$ is a leaf, the corresponding conical neighborhood gives a set of the partition. For every complex line $L_{(1,j)}$ such that $(1,j)$ is {\emph{not}} a leaf, the lowest order homogeneous term of $R_{(1,j)}=R\circ\Psi_{(1,j)}$ is harmonic along a finite (strictly positive) number of complex lines through $0$. Once again, we remove small conical neighborhoods of these lines and the resulting set contributes a set to the partition after adjusting for the change of coordinates $\Psi_{(1,j)}$. We continue in the obvious way and obtain the announced partition, which is finite, since the graph is finite. We denote this partition as
\begin{align*}
\Omega_0^{**}=\bigcup_{(m,n)\in V}\mathcal{S}{(m,n)}\text{,}
\end{align*}
where $\mathcal{S}(m,n)$ is the region corresponding to the node $(m,n)$. We estimate the integral by considering the regions corresponding to the nodes separately, i.e.\ it suffices to show that the integral
\begin{align*}
\int_{\mathcal{S}{(m,n)}}{|<{A_{\Psi_0}}^{-1}{\omega},} & {{\omega}>| e^{-\Psi_0}}\\
\lesssim & {\phantom{+}}\int_{\mathcal{S}{(m,n)}}\frac{|\overline{\partial}_{z,w}{P_2}\wedge\overline{\partial}_{z,w}{P_3}|^2}{|{\Phi}|^6} e^{-\Psi_0}\cdot\frac{h^2}{HH_{z\overline{z}}-|H_z|^2}\\
& +\int_{\mathcal{S}{(m,n)}}\frac{|\overline{\partial}_{z,w}{P_2}\wedge\overline{\partial}_{z,w}{P_3}|^2}{|{\Phi}|^6} e^{-\Psi_0}\cdot\frac{h^2}{HH_{w\overline{w}}-|H_w|^2}
\end{align*}
is finite for every node $(m,n)\in V$, where integration occurs with respect to the Lebesgue measure on $\mathbb{R}^6$. We start at the root $(0,0)$ and inductively work our way down to all the leaves.

\subsubsection{first step}
We start at the root of the tree, i.e.\ we look at a region in the $(z,w)$-plane (in the original coordinates), where small conical neighborhoods of the critical complex lines through $0$ have been removed (the complex lines through $0$, along which the lowest-order homogeneous term of $R$ is harmonic (finitely many)). If $(0,0)$ has empty children set, then there have not been any lines removed and $\Phi$ takes a particularly simple form. The estimates are much easier in this case, so we assume that $\mathcal{C}(0,0)\neq\emptyset$.\\
\\
We take $\rho$ to be {\emph{any}} of the $\rho_{(m,n)}$ (the choice will matter in the induction step, but not here). Considering that we avoid conical neighborhoods of the critical lines, we can ignore the terms contributed by other nodes of the graph in the estimates for the region $\mathcal{S}(0,0)$. We essentially have the following:
\begin{align*}
\rho & =\log\left({|{\xi}|+|{\xi}|^2+|z-{\tau}w|^{2k}+|z-{\tau}w|^{2j}|w|^{2k-2j}+\operatorname{remainder}}\right)\\
h &={|{\xi}|+|{\xi}|^2+|z-{\tau}w|^{2k}+|z-{\tau}w|^{2j}|w|^{2k-2j}+\operatorname{remainder}}\\
H & =|z-{\tau}w|^{2k}+|z-{\tau}w|^{2j}|w|^{2k-2j}+\operatorname{remainder}\text{,}
\end{align*}
where $z$ is bounded away from $\tau w$. Also
\begin{align*}
    P_2 & =  -A(z-\tau w)^{k-1}(\overline{z}-\overline{\tau} \overline{w})^k+\operatorname{remainder}\\
P_3 & = -A(-\tau (z-\tau w)^{k-1}(\overline{z}-\overline{\tau} w)^k
+|z-\tau w|^{2j_i}w^{k-j_i-1}\overline{w}^{k-j_i})+\operatorname{remainder},
\end{align*}

	where the respective remainders are also insignificant when computing derivatives. A calculation gives (here, $x$ can be either $z$ or $w$; note that we are in the first step):
	\begin{align*}
	|\overline{\partial}P_2|^2 & \sim |x|^{4k-4}\\
	HH_{z\overline{z}}-|H_z|^2 & = (k-j)^2|z-{\tau}w|^{2k+2j-2}|w|^{2k-2j}+\operatorname{remainder}\\
	& \sim |x|^{4k-2}\\
	HH_{w\overline{w}}-|H_w|^2 & = (k-j)^2|z-{\tau}w|^{2k+2j-2}|w|^{2k-2j-2}|z|^2+\operatorname{remainder}\\
	& \sim |x|^{4k-2}\text{.}
	\end{align*}
	
	Using this, we can estimate the integrand as follows (we introduce a small $\delta >0$ to kill a potential $\log$-term; note also that in the current region we have $|\Phi |\sim h$):
	\begin{align*}
	& \frac{|\overline{\partial}{P_2}\wedge\overline{\partial}{P_3}|^2}{|{\Phi}|^6} e^{-\Psi_0 (1+\delta )}\cdot\left(\frac{h^2}{HH_{z\overline{z}}-|H_z|^2}+\frac{h^2}{HH_{w\overline{w}}-|H_w|^2}\right)\\
	\lesssim & e^{-\Psi_0 (1+\delta )}\cdot \frac{h^2}{|x|^{4k-2}}\cdot\frac{|x|^{4k-4}|\overline{\partial}P_3|^2}{|{\Phi}|^6}\\
	\lesssim & e^{-\Psi_0 (1+\delta )}\cdot\frac{1}{|x|^2}\cdot\frac{|\overline{\partial}P_3|^2}{|{\Phi}|^4}\text{.}
	\end{align*}
	We have to integrate with respect to the form $d\xi\wedge d\overline{\xi}\wedge dz\wedge d\overline{z}\wedge dw\wedge d\overline{w}$.
	Roughly speaking, integrating with respect to $d\xi\wedge d\overline{\xi}$ turns $1/|\Phi |^4$ into $1/|\Phi |^2$ and integrating with respect to $dz\wedge d\overline{z}$ takes care of the $1/|x|^2$. What remains is to estimate
	\begin{align*}
	\int e^{-\Psi_0 (1+\delta )}\cdot\frac{|\overline{\partial}P_3|^2}{|{\Phi}|^2}dw\wedge d\overline{w}\text{,}
	\end{align*}
	which turns out to be finite, as desired.
	
	\subsubsection{Induction step}
	It remains to see what happens in the conical neighborhood of one of the lines. After the usual coordinate change, this region looks like the region from the first step. So this leads to an inductive procedure, where we dig our way down from the root of the tree all the way down to the leaves (with only the leaves needing special treatment, since the leaf-terms look a bit different).
	
	So consider the region $\mathcal{S}(m,n)$ for a node $(m,n)\neq{(0,0)}$. In the new coordinates, the region looks the same as the region from the first step, so if we can convince ourselves that also the integrand including the Jacobian from the change of coordinates looks the same as in the first step (in the new coordinates), then the integral can be estimated precisely as before. We denote the coordinates corresponding to the node $\mathcal{A}(m,n)$ as $(\xi ,z,w)$. We call the coordinate change $\phi$ and denote the new coordinates as $(z_1 , w_1)$; the coordinate $\xi$ of course stays the same. If $(m,n)$ is a leaf, then the occurring expressions take a simpler form, making the estimates much easier; so we assume that $(m,n)$ is not a leaf.
	\\
	
	We look at the integral involving the ${h^2}/({HH_{z\overline{z}}-|H_z|^2})$ term (the other term ($w$ instead of $z$) can be handled similarly, although the expression of that term in the new coordinates will look a bit more complicated).
	\\
	
	We take $\rho =\rho_{(m',n')}$, where $(m',n')$ is any leaf with the property that the current node $(m,n)$ lies on the uniquely determined shortest path from the root to $(m',n')$. With other words: the current node $(m,n)$ can be reached from $(m',n')$ by iterating the ancestor function $\mathcal{A}$. So we look at
	\begin{align*}
	\frac{|\overline{\partial}_{z,w}{P_2}\wedge\overline{\partial}_{z,w}{P_3}|^2}{|{\Phi}|^6} e^{-\Psi_0}\cdot\frac{h^2}{HH_{z\overline{z}}-|H_z|^2}\cdot |\det\phi '|^2
	\end{align*}
	in the new coordinates $(z_1 , w_1 )$, where the determinant factor comes from the change of variables. Since
	\begin{align*}
	|\overline{\partial}_{z,w}{P_2}\wedge\overline{\partial}_{z,w}{P_3}|^2\cdot |\det\phi '|^2=|\overline{\partial}_{z_1 ,w_1 }{P_2}\wedge\overline{\partial}_{z_1 ,w_1 }{P_3}|^2\text{,}
	\end{align*}
	we can estimate the integrand to be
	\begin{align*}
	\lesssim \frac{|\overline{\partial}_{z_1 ,w_1 }{P_2}|^2|\overline{\partial}_{z_1 ,w_1 }{P_3}|^2}{|{\Phi}|^6} e^{-\Psi_0}\cdot\frac{h^2}{HH_{z\overline{z}}-|H_z|^2}
	\end{align*}
	We want to make this look like in the first step. But recalling how $P_2$ and $P_3$ were chosen relative to $\Phi$, the expression $|\overline{\partial}_{z_1 ,w_1 }{P_2}|^2$ (resp.\ $|\overline{\partial}_{z_1 ,w_1 }{P_3}|^2$) is still missing a factor $\sim |z/z_1 |^2$ (resp.\ $\sim |w/w_1 |^2$) in order to look like in the previous step. Furthermore, we still need to express ${HH_{z\overline{z}}-|H_z|^2}$ in the new coordinates and we point out that the $1/k$ occurring in the definition of $\Psi_0$ still corresponds to the first step. But, using that $\partial w/\partial z_1=0$ and $z={z_1}^{\alpha}+{\tau}w_1^\beta$, we get:
	\begin{align*}
	\frac{1}{HH_{z\overline{z}}-|H_z|^2}=\frac{1}{HH_{z_1\overline{z_1}}-|H_{z_1}|^2}\cdot \left|\frac{\partial z}{\partial z_1}\right|^2=\frac{1}{HH_{z_1\overline{z_1}}-|H_{z_1}|^2}\cdot \left|\alpha{z_1}^{\alpha -1}\right|^2\text{.}
	\end{align*}
	So, in order to account for the $1/k$ term in the definition of $\Psi_0$ and the missing factor $|w/w_1 |^2\cdot |z/z_1 |^2$, we point out that (roughly speaking)
	\begin{align*}
	e^{1/k\log|\Phi |}\cdot \left|\alpha{z_1}^{\alpha -1}\right|^2=\left|\frac{w}{w_1}\right|^2\cdot\left|\frac{w_1}{w}\right|^2\cdot \left|\alpha{z_1}^{\alpha -1}\right|^2 \cdot e^{1/k\log|\Phi |}\sim \left|\frac{w}{w_1}\right|^2\cdot\left|\frac{z}{z_1}\right|^2\cdot e^{1/{k_1}\log|\Phi |}\text{,}
	\end{align*}
	where $2k_1$ is the degree of the lowest-order homogeneous term when expressing $R$ with respect to $z_1$ and $w_1$. Now the integrand is as in the first step and the estimate goes through the same way.

\subsection{Volume of polydiscs}
In Section \ref{section:01estimates} we will pass from $L^2$ estimates to pointwise estimates using subaveraging. With this in mind, we have to describe the volume of ``a large polydisc'' $Q(q)\subseteq\Omega_{0}^{**}$ centered at a boundary point $q\in\partial\Omega\setminus\{0\}$ close to $0$. This will be done with respect to the coordinates corresponding to the current region in the partition of the $(z,w)$-plane; $Q(q)$ is chosen to be a polydisc in those coordinates:

Recalling that
\begin{align*}
\Omega_0^{**}=\bigcup_{(m',n')\in V}\mathcal{S}{(m',n')}\text{,}
\end{align*}
where $\mathcal{S}(m',n')$ is the region corresponding to the node $(m',n')$, we find a node $(m,n)$, such that $q\in \mathcal{S}{(m,n)}$. In the coordinates $(\xi ,z_{(m,n)},w_{(m,n)})$ corresponding to $(m,n)$, we can fit a polydisc $Q(q)$ with
\begin{align*}
    \sqrt{\operatorname{Vol}(Q(q))}\sim (|z_{(m,n)}|+\norm{(\xi ,z,w)}^{2L})(|w_{(m,n)}|+\norm{(\xi ,z,w)}^{2L})|{\Phi}(q)|\text{,}
\end{align*}
as is obvious from the choice of partition.

\section{Passing from $L^2$ estimates to pointwise estimates for $v_1$, $v_2$, $v_3$} \label{section:01estimates}

The aim of this section is to give good local estimates for the functions $v_1$, $v_2$, $v_3$ appearing in the Koszul complex (see Section \ref{section:koszul}). Recall, we do this in the local coordinates where the domain $\Omega$ is given by
\begin{align*}
\Omega=\left\{\operatorname{Re}({\eta_1^0-z_1})+r(\eta_2^0-z_2,\eta_3^0-z_3)+s({\eta_1^0-z_1},\eta_2^0-z_2,\eta_3^0-z_3)<0\right\}.
\end{align*}

\begin{lemma} \label{lem:estimates}
    Let $p=(\eta_1^{0},\eta_2^{0},\eta_3^{0})\in \di \Omega$, $(z_1,z_2,z_3) \in \di \Omega$, $2L$ is the D'Angelo type, $2k$ is the hypersurface type (or Bloom-Graham type), and $\delta>0$ such that $\delta \ll \frac{1}{2k}$ and $\delta \ll \frac{1}{2L}$, then
    
\begin{align*}
|v_1(z_1,z_2,z_3)| & \leq C \cdot \frac{|\eta_3^{0}-z_3|}{M_1 M_2} \cdot \frac{1}{\Phi^{1+\frac{1}{2k}+\delta-\frac{1}{2L}}} \\
|v_2(z_1,z_2,z_3)| & \leq C \cdot \frac{|\eta_2^{0}-z_2|}{M_1 M_2}\cdot \frac{1}{\Phi^{1+\frac{1}{2k}+\delta-\frac{1}{2L}}}\\
|v_3(z_1,z_2,z_3)| & \leq C \cdot \frac{1}{M_1 M_2}\frac{1}{\Phi^{\frac{1}{2k}+\delta-\frac{1}{2L}}}
\end{align*}
where 
\begin{align*}
M_1 & = |z_{(m,n)}|+\norm{(\eta_1^0-z_1,\eta_2^0-z_2,\eta_3^0-z_3)}^{2L}\\
M_2 & = |w_{(m,n)}|+\norm{(\eta_1^0-z_1,\eta_2^0-z_2,\eta_3^0-z_3)}^{2L}
\end{align*}
and $z_{(m,n)}$ and $w_{(m,n)}$ reflect of the changes of coordinates in different zones as we approach an exceptional curve (see Subsection \ref{sub:graph}), $\Phi=\Phi(\xi, z, w)$ and $\xi=\eta_1^{0}-z_1$, $z=\eta_2^0-z_2$, and $w=\eta_3^0-z_3$
\end{lemma}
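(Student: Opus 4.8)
The plan is to combine the $L^2$-estimates furnished by Theorem \ref{thm:hor} (and the $(0,2)$-version recorded in Section \ref{section:omegaestimate}) with the sub-mean-value inequality over the polydiscs $Q(q)$ from the subsection on the volume of polydiscs, organized in three layers that mirror the three applications of Hörmander's theorem in Section \ref{section:koszul}: first control $u$, then the closed $(0,1)$-forms $h_{1,2}-(\eta_3^0-z_3)u$, $h_{1,3}+(\eta_2^0-z_2)u$, $h_{2,3}-(\eta_1^0-z_1)u$, and finally $v_1,v_2,v_3$. Throughout, $q$ denotes the point $(z_1,z_2,z_3)$, the polydisc $Q(q)\subseteq\Omega_0^{**}$ is the one from the volume-of-polydiscs subsection (so that $\sqrt{\operatorname{Vol}(Q(q))}\sim M_1 M_2 |\Phi(q)|$, and $|\Phi|\sim|\Phi(q)|$ on $Q(q)$ since $\{\Phi=0\}\cap\Omega_0^{*}=\emptyset$), and one repeatedly uses $|\Phi|\sim\text{dist}(\cdot,\partial\Omega_0^{*})$ together with the explicit shape of the weights.

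First I would estimate $u$. Since $\omega=2\,\overline{\partial}P_2\wedge\overline{\partial}P_3/\Phi^3$ is an explicit $\overline{\partial}$-closed $(0,2)$-form, the version of Hörmander's theorem in Section \ref{section:omegaestimate}, applied on $\Omega_0^{**}$ with the weight $\Psi_0$, gives $\int_{\Omega_0^{**}}|u|^2 e^{-\Psi_0}\lesssim 1$, the finiteness of the right-hand side being exactly what was established region-by-region in Section \ref{section:omegaestimate}. To pass to a pointwise bound near $q$ I would solve $\overline{\partial}u_0=\omega$ on $Q(q)$ by an explicit local (Cauchy-transform type) solution, so that $\|u_0\|_{L^\infty(Q'(q))}\lesssim\operatorname{diam}(Q(q))\cdot\|\omega\|_{L^\infty(Q(q))}$ on a slightly shrunk polydisc $Q'(q)$, and then apply the sub-mean-value inequality to the holomorphic function $u-u_0$, getting $|u(q)|\lesssim\operatorname{Vol}(Q(q))^{-1/2}\|u-u_0\|_{L^2(Q(q))}+\|u_0\|_{L^\infty(Q'(q))}\lesssim\operatorname{Vol}(Q(q))^{-1/2}\sup_{Q(q)}e^{\Psi_0/2}+\operatorname{diam}(Q(q))\|\omega\|_{L^\infty(Q(q))}$. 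Reading off $\sup_{Q(q)}e^{\Psi_0/2}$ from the form of $\Psi_0$ (the $\tfrac1k\log|\Phi|$-term contributing a power of $|\Phi(q)|$, the small $d$, $\epsilon J$ and logarithmic contributions being harmless up to a loss $\delta$) yields an explicit pointwise bound for $u$ near $q$.

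Second, a short computation from $g_j=P_j/\Phi$ (with $P_1\equiv 1$) gives $h_{1,2}=-\overline{\partial}P_2/\Phi^2$, $h_{1,3}=-\overline{\partial}P_3/\Phi^2$ and $h_{2,3}=(P_3\overline{\partial}P_2-P_2\overline{\partial}P_3)/\Phi^2$, so the three closed $(0,1)$-forms are explicit apart from the $u$-terms, and Step~1 controls their $L^\infty(Q(q))$-norms. The weights $\psi_1,\psi_2,\psi_3$ were chosen in Section \ref{section:koszul} precisely so that these forms lie in $L^2(\psi_1)$, $L^2(\psi_2)$, $L^2(\psi_3)$; after the singular coordinate changes attached to the nodes of $G$ and the partition $\Omega_0^{**}=\bigcup_{(m,n)}\mathcal{S}(m,n)$, the resulting estimates reduce to those already carried out in Section \ref{section:omegaestimate}, so a third application of Theorem \ref{thm:hor} yields $\int_{\Omega_0^{**}}|v_j|^2 e^{-\psi_j}\lesssim 1$. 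Finally I would use the elliptic estimate on the polydisc,
\begin{align*}
|v_j(q)|\lesssim\frac{\sup_{Q(q)}e^{\psi_j/2}}{\sqrt{\operatorname{Vol}(Q(q))}}\left(\int_{Q(q)}|v_j|^2 e^{-\psi_j}\right)^{1/2}+\operatorname{diam}(Q(q))\cdot\|\overline{\partial}v_j\|_{L^\infty(Q(q))}\text{,}
\end{align*}
and insert $\sqrt{\operatorname{Vol}(Q(q))}\sim M_1 M_2|\Phi(q)|$, the size of $e^{\psi_j/2}$ on $Q(q)$ (again via $|\Phi|\sim\text{dist}(\cdot,\partial\Omega_0^{*})$ and the $\|\cdot\|^{2L}$-bumping terms), and the Step-1/Step-2 bounds on $\overline{\partial}v_j$: the factors $|\eta_2^0-z_2|$, $|\eta_3^0-z_3|$ emerge from the $(\eta_\ell^0-z_\ell)u$ terms together with the divisibility built into $P_2,P_3$, the power $\tfrac{1}{2k}$ is traced back to the $\tfrac1k\log|\Phi|$ term of $\Psi_0$ (inherited by the $\psi_j$), the gain $-\tfrac{1}{2L}$ comes from the $\|\cdot\|^{2L}$-factors in $M_1,M_2$ and in $\operatorname{diam}(Q(q))$, and the loss $\delta$ absorbs the small $\epsilon J$, $d$ and logarithmic terms; the slightly different shape of $h_{2,3}$ accounts for the bound on $v_3$ being stated without the $1+$ and without an $|\eta_\ell^0-z_\ell|$ factor.

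The main obstacle is the bookkeeping in the last two steps: one has to track precisely how the weights $\Psi_0,\psi_1,\psi_2,\psi_3$, the quantities $M_1,M_2$, and the polydiscs $Q(q)$ transform under the (singular) coordinate changes associated to the nodes of the graph $G$ of Subsection \ref{sub:graph}, so that after summing over all zones $\mathcal{S}(m,n)$ the powers of $|\Phi(q)|$, $M_1$, $M_2$ and the factors $|\eta_2^0-z_2|$, $|\eta_3^0-z_3|$ come out exactly as stated. This is where the inductive structure of the construction of $\Phi$, the weights of Section \ref{section:weights}, and the region-by-region estimates of Section \ref{section:omegaestimate} are used in full; by comparison the analytic input (Hörmander's theorem plus subaveraging) is comparatively soft.
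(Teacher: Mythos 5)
Your overall architecture---H\"ormander's theorem with the graph-built weights on $\Omega_0^{**}$, followed by subaveraging over the polydisc $Q(q)$ with $\sqrt{\operatorname{Vol}(Q(q))}\sim M_1M_2|\Phi(q)|$---is the same as the paper's, but your Step 1 contains a step that fails. Since $\omega$ is a $(0,2)$-form, $u$ is a $(0,1)$-\emph{form}; after subtracting a local solution $u_0$ on $Q(q)$, the difference $u-u_0$ is a $\overline\partial$-closed $(0,1)$-form, not a holomorphic function, and its coefficients satisfy no sub-mean-value property (already in one variable every $(0,1)$-form is $\overline\partial$-closed, with arbitrary smooth coefficient). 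So the pointwise bound for $u$ you propose is unsupported, and this gap propagates: your Step 3 correction term $\operatorname{diam}(Q(q))\cdot\|\overline\partial v_j\|_{L^\infty(Q(q))}=\operatorname{diam}(Q(q))\cdot\|s_j\|_{L^\infty(Q(q))}$ needs exactly that missing sup-norm control of $u$. (Your local-correction device is fine for the $v_j$ themselves, since they are functions and $v_j-w_j$ is genuinely holomorphic; it is only at the level of $u$ that it breaks down.)

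The paper's proof never uses pointwise information about $u$, which is how it sidesteps this issue. The coordinate factors $wu$, $zu$, $\xi u$ in $s_1,s_2,s_3$ are absorbed into the weights themselves: $\psi_j=\widetilde{\psi_0}+\log(|w|^2+\norm{(\xi,z,w)}^{4L})+\log(\norm{(\xi,z,w)}^{2})$ (and the analogues with $|z|^2$, $|\xi|^2$), where $\widetilde{\psi_0}=\psi_0-\widetilde{\epsilon}\log\operatorname{dist}(\cdot,\partial\Omega_0^{*})+\widetilde{\epsilon}\kappa$ and $\kappa=\log(|\xi|+|\xi|^2+|z|^{2M}+|w|^{2M})$ is strictly plurisubharmonic; then $s_j\in L^2(\psi_j)$ follows from $u\in L^2(\psi_0)$ alone, and Lemma \ref{onesummandenough} applied to the $\widetilde{\epsilon}\kappa$ piece gives $\langle A_{\psi_j}^{-1}s_j,s_j\rangle\lesssim\frac{1}{\widetilde{\epsilon}}\norm{s_j}^2\norm{(\xi,z,w)}^{2}$, which is what makes the right-hand side of Theorem \ref{thm:hor} finite. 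The choice $\epsilon J+\widetilde{\epsilon}/k<\delta$ is where the $\delta$-loss is fixed, and the final exponents come from $e^{\psi_j(q)/2}/\sqrt{\operatorname{Vol}(Q(q))}$ together with $\norm{(\xi,z,w)}\lesssim|\Phi|^{1/(2L)}$ and $|\xi|+|z|^{2L}+|w|^{2L}\leq|\Phi|$---not, as you suggest, from the $\norm{\cdot}^{2L}$ terms inside $M_1,M_2$ or from $\operatorname{diam}(Q(q))$. If you drop your Step 1 and route the coordinate factors through these weights, your argument collapses onto the paper's; as written, the plan has a genuine gap.
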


\begin{proof} We choose $\e, \tilde{e}>0$ so that $\e J+\frac{\tilde{\e}}{k}<\delta$. We have shown that we can solve $\overline\partial u=\omega$ on $\Omega_0^{**}$ such that
$$
\int_{\Omega_0^{**}} |u|^2 e^{-\psi_0}\leq C.
$$

Let $\kappa = \log{\left(|\xi| +|\xi|^2 +|z|^{2M}+|w|^{2M}\right)}$
where $M$ is larger than the degree of bumping from Diederich-Forn\ae ss (see \cite{DF}) bumping. Here we use the weight 
\begin{equation*}
\widetilde{\psi_0}= \psi_0 -\widetilde{\epsilon}\log \operatorname{dist}({\cdot},\partial\Omega_0^{*})+\widetilde{\epsilon}\kappa\text{.}
\end{equation*}

Define
\begin{align*}
\psi_1 & =\widetilde{\psi_0} +\log (|w|^2+\norm{(\xi ,z,w)}^{4L})+\log (\norm{(\xi ,z,w)}^{2})\\
\psi_2 & =\widetilde{\psi_0} +\log (|z|^2+\norm{(\xi ,z,w)}^{4L})+\log (\norm{(\xi ,z,w)}^{2})\\
\psi_3 & =\widetilde{\psi_0} +\log (|{\xi}|^2+\norm{(\xi ,z,w)}^{4L})+\log (\norm{(\xi ,z,w)}^{2}).
\end{align*}

Noting that
\bea
h_{1,2} & = -\frac{\overline\partial P_2}{\Phi^2}\\
h_{1,3} & = -\frac{\overline\partial P_3}{\Phi^2}\\
h_{2,3} & = \frac{P_3\overline\partial P_2-P_2\overline\partial P_3}{\Phi^2},\\
\eea
our next task is to find solutions with good estimates for
\bea
\overline\partial v_1 & = & -\frac{\overline\partial P_2}{\Phi^2}-w u=:s_1\\
\overline\partial v_2 & = & -\frac{\overline\partial P_3}{\Phi^2}-z u=:s_2\\
\overline\partial v_3 & = & \frac{P_3\overline\partial P_2-P_2\overline\partial P_3}{\Phi^2}-\xi u=:s_3.
\eea

Since $u\in L^2 (\psi_0 )$, it follows that $s_1\in L^2 (\psi_1 )$, $s_2\in L^2 (\psi_2 )$ and $s_3\in L^2 (\psi_3 )$. Recall H{\" o}rmander's $L^2$-estimates:

\begin{thmnonum}
	Let $\rho$ be a plurisubharmonic function on $D\subset \mathbb C^3$, pseudoconvex, $s$ is a $\overline\partial-$
	closed $(0,1)-$form. Then there exists a $(0,1)-$form $v$ such that $\overline\partial v=s$ and
	
	$$
	\int_D |v|^2 e^{-\rho} \leq C \int_D <A^{-1}s, s> e^{-\rho}\text{,}
	$$
	where $A$ is the Complex Hessian Matrix of $\rho$.
\end{thmnonum}
We apply the theorem with $s_j$, $\psi_j$, $\Omega_0^{**}$ in the roles of $s$, $\rho$, $D$ respectively. With regards to finiteness of the integral on the right hand side we note that, using Lemma \ref{onesummandenough}:
\begin{align*}
    <A_{\psi_j}^{-1}s_j, s_j> & \leq \frac{1}{\widetilde{\e}}<A_{\kappa}^{-1}s_j, s_j>\\
    & \leq \frac{C}{\widetilde{\epsilon}}\norm{s}^2\norm{(\xi ,z,w)}^2
\end{align*}

Let $v_1$, $v_2$ and $v_3$ be obtained from $s_1$, $s_2$, $s_3$ by applying this result. We want to pass to pointwise estimates for the $v_1$, $v_2$ and $v_3$.

The fact that $v_1$, $v_2$ and $v_3$ are ``part'' of holomorphic functions allows us to use subaveraging to get pointwise estimates.

Let $Q(q)=Q({\xi},z,w)$ be the ``largest polydisc'' with center $q\in\partial\Omega\setminus\{p\}$ and $Q(q)\subseteq\Omega_{p}^{**}$. Then
\begin{align*}
\sqrt{\operatorname{Vol}(Q(q))}\geq c\cdot (|u|+\norm{(\xi ,z,w)}^{2L})(|v|+\norm{(\xi ,z,w)}^{2L})|{\Phi}(q)|\text{,}
\end{align*}
where $c>0$. Furthermore we have
\begin{align*}
|v_j (q)|^2 & \leq \widetilde{A} \frac{1}{\operatorname{Vol}(Q)} \int_{Q} |v_j |^2\\
& \leq \widetilde{\widetilde{A}} \frac{\exp({\psi_j}(q))}{\operatorname{Vol}(Q)} \int_{Q} |v_j |^2 e^{-\psi_j ({\eta})}.
\end{align*}
This follows since $\psi_j (q)-\psi_j ({\xi})$ is bounded in $Q$. Now
\begin{align*}
\int_{Q} |v_j |^2 e^{-\psi_j ({\eta})}\leq \int_{\Omega_p^{**}} |v_j |^2 e^{-\psi_j}\leq B<\infty .
\end{align*}
From this we get that
\begin{align*}
|v_j (q)|\leq C \frac{e^{\frac{1}{2}\psi_j (q)}}{\sqrt{\operatorname{Vol}(Q(q))}}
\end{align*}
From the choice of $\psi_0$ and the fact that $d(q,\partial\Omega_p^* )\sim |\Phi (q)|$ when $q\in\partial\Omega\setminus\{p\}$ we obtain the following pointwise estimates:
\begin{align*}
|v_1(q)| & \leq C'\frac{|w|}{|\Phi |^{1+1/(2k)+\delta}}\frac{1}{M_1 M_2}\cdot \norm{(\xi, z,w)}\\
|v_2(q)| & \leq C'\frac{|z|}{|\Phi |^{1+1/(2k)+\delta}}\frac{1}{M_1 M_2}\cdot \norm{(\xi, z,w)}\\
|v_3(q)| & \leq C'\frac{\left( |{\xi}|+|z|^{2L}+|w|^{2L}\right)}{|\Phi |^{1+1/(2k)+\delta}}\frac{1}{M_1 M_2}\cdot \norm{(\xi, z,w)}\text{.}
\end{align*}
Here we have $\norm{(\xi,z,w)}\leq C'' |\Phi|^{\frac{1}{2L}}$ and $\left( |{\xi}|+|z|^{2L}+|w|^{2L}\right)\leq |\Phi|$. The Lemma follows. 
\end{proof}

\section{Proof of the Main Theorem} \label{section:proof}

Now we use the above construction to first create a local kernel, then change coordinates such that each local kernel is given in the same coordinates. We then glue the pieces together and obtain a solution operators on a slightly smaller domains. These solution operators will give uniform estimates which allow us to use a normal families argument to get the solution operator $S_\Omega(f)$ such that $\db S_\Omega(f) =f$ and $\norm{S_{\Omega} (f)}_\infty \leq C(\Omega) \norm{f}_\infty$. We proceed as follows.

\subsection{} We let 
\begin{equation*}
    \Psi(\eta, z) = \sum_{j=1}^{3}{h_j(\eta^0,z)}(\eta_j-z_j),
\end{equation*}
then we see that $\Psi$ is continuous in $\eta$, and hence there exists a neighborhood $U$ of $\eta^0$ in $\di \Omega$ such that $|\Psi(\eta,z)|\geq \frac{1}{2}$ when $\eta \in U =U(\eta^0)$.

From this we get new local solutions to the Cauchy-Fantappie equation by taking $$\widetilde{h}_j=\frac{h_j}{\Psi}.$$

\subsection{} \label{sub:two}

Now choose $\e>0$. Next we use that the estimates we obtain in terms of $\eta^0-z$ translate to similar estimates in terms of $\eta-z$ if $z\in \Omega_{-\e}$ and $\eta \in U$. Note we need to shrink $U$ depending on $\e$.

\subsection{} 

Finally we use a partition of unity relative to a covering of $\di \Omega$ by the family $\{U(\eta^0) \}_{\eta^0\in \di \Omega}$ to glue the solutions to the Cauchy-Fantappie equation together. 

When estimating the kernel in $\C^3$, we need to study terms of the following forms:
\begin{equation} \label{eq:terms}
\frac{\overline{\eta}_i-\overline{z}_i}{\norm{\eta-z}^2}\left(h_j \frac{\di h_k}{\di \zeta} -h_k \frac{\di h_j}{\di \zeta} \right)
\end{equation}
and 
\begin{equation}
h_j \frac{1}{\norm{\eta-z}^3} 
\end{equation} where
\begin{equation}
    \sum_{j=1}^3{h_j(\eta,z)(\eta_j-z_j)}\equiv 1
\end{equation} and $\zeta$ is a complex-tangential variable. 

We start by looking at the situation when $p=\eta^0=(\eta_1^0.\eta_2^0,\eta_3^0)$ is one fixed point in $\di \Omega$ and 
$$(\eta_1^0-z_1,\eta_2^0-z_2, \eta_3^0-z_3)=(\xi, z, w)$$ are the coordinates above. 

Now let

\begin{align*}
h_1 & = g_1 -(\eta_2^0 -z_2) v_1 - (\eta_3^0-z_3)v_2\\
h_2 & = g_2 -(\eta_2^1 -z_1) v_1 - (\eta_3^0-z_3)v_3\\
h_3 & = g_3 -(\eta_1^0 -z_1) v_2 - (\eta_2^0-z_2)v_3.
\end{align*}

When we insert this information into \ref{eq:terms}, we end up with a long list of terms to consider. Representative of the challenges, we need to consider the following:
\begin{align}
& \frac{\overline{\eta}_3^0-\overline{z}_3}{\norm{\eta-z}^2}g_1(\eta_3^0-z_3) \frac{\di v_3}{\di \zeta}\\
& v_1 (\eta_1^0-z_1) (\eta_2^0-z_2) \frac{\di v_1}{\di \zeta} \frac{\overline{\eta}_3^0-\overline{z}_3}{\norm{\eta-z}^2} \\
& g_3(\eta_3^0-z_3) \frac{\di v_2}{\di \zeta} \frac{\overline{\eta}_2^0-\overline{z}_2}{\norm{\eta-z}^2}
\end{align} where $\zeta$ is either the second or third variable. Note that $\frac{\eta_1^0-z_1}{\Phi}$ and $g_3(\eta_3^0-z_3)$ are bounded, and we use Lemma \ref{lem:estimates} to see that all the terms satisfy the following estimates:
\begin{align*}
\left| \frac{\overline{\eta}_3^0-\overline{z}_3}{\norm{\eta-z}^2}g_1(\eta_3^0-z_3) \frac{\di v_3}{\di \zeta}\right| & \leq C \frac{1}{|\Phi|^{1+\frac{1}{2k}+\delta}} |\Phi|^{\frac{1}{2L}} \frac{|\eta_3^0-z_3|^2}{\norm{\eta-z}^2} \frac{1}{M_2(\eta^0,z)M_3(\eta^0,z)} \frac{1}{|\zeta|}\\
\left| v_1 (\eta_1^0-z_1) (\eta_2^0-z_2) \frac{\di v_1}{\di \zeta} \frac{\overline{\eta}_3^0-\overline{z}_3}{\norm{\eta-z}^2} \right| & \leq C \frac{1}{M_2(\eta^0,z)M_3(\eta^0,z)} \frac{1}{|\Phi|^{1+\frac{1}{2k}+\delta}}|\Phi|^{\frac{1}{2L}}\frac{|\eta_2^0-z_2||\eta_3^0-z_3|^2}{\norm{\eta-z}^2}\frac{1}{|\zeta|}\\
\left| g_3(\eta_3^0-z_3) \frac{\di v_2}{\di \zeta} \frac{\overline{\eta}_2^0-\overline{z}_2}{\norm{\eta-z}^2} \right| &\leq C \frac{1}{M_2(\eta^0,z)M_3(\eta^0,z)}\frac{1}{|\Phi|^{1+\frac{1}{2k}+\delta}}|\Phi|^{\frac{1}{2L}} \frac{|\eta^0_2-z_2|^2}{\norm{\eta-z}^2} \frac{1}{|\zeta|}
\end{align*}

If we now use the trick from section \ref{sub:two}, the fact that $\frac{\eta_1-z_1}{\Phi}$ and $\frac{\eta_2-z_2}{\Phi^{\frac{1}{k}}}$ are bounded, and several integration by parts, we see that the integral of the kernel is uniformly bounded independent of $\e>0$.

Finally, as in \cite{R1:holder}, these estimates are now used used to prove the H{\"o}lder estimates in the main theorem.

\section{Some comments about dimensions higher than $3$}\label{sectionhigherdim}
As before, we start with a fixed boundary point $p=(\eta_1 , \dots , \eta_n )$ and smooth solutions $g_1 , \dots , g_n$ to the equation
\begin{align*}
    \sum_{j=1}^{n} g_j(p,z)(\eta_j -z_j)\equiv 1\text{.}
\end{align*}
Then, as before, we obtain
\begin{align}\label{10eyns}
    \sum_{j=1}^{n} \overline{\partial}_z g_j(p,z)(\eta_j - z_j)=0\text{.}
\end{align}
Further:
\begin{align*}
    \overline{\partial}g_i = \overline{\partial}g_i \sum_{j=1}^n g_j (\eta_j -z_j) \text{, }i=1,\dots ,n\text{.}
\end{align*}
From \ref{10eyns} we see that we can replace $\overline{\partial}g_i (\eta_i - z_i )$ by $-\sum_{j\neq i} \overline{\partial}g_j (\eta_j -z_j)$. Iteratively, we follow a Koszul complex-like procedure \cite{MR226387} and get $\overline{\partial}$-closed $(0,q)$-forms until we end up with a $(0,n-1)$-form
\begin{align*}
    \omega = \sum_{j=1}^n \delta_j g_j \overline{\partial}g_1 \wedge \dots \wedge \overline{\partial}g_{j-1} \wedge \overline{\partial} g_{j+1} \wedge \dots \wedge \overline{\partial} g_n\text{,}
\end{align*}
such that $\overline{\partial}\omega =0$. Then we use H{\"o}rmander with weights and matrices (Theorem \ref{thm:hor}) to find $u$, such that $\overline{\partial}u=\omega$. Step by step, we go back and solve $\overline{\partial}$-problems for $(0,q)$-forms with different weights and take advantage of the fact that our data will be multiplied by terms of the form $(\eta_i -z_i )$ in each step backwards to get holomorphic solutions
\begin{align*}
    \sum_{j=1}^n h_j(p,z)(\eta_j -z_j)\equiv 1\text{.}
\end{align*}
Again we choose
\begin{align*}
    g_1 =\frac{1}{\Phi}\text{ and }g_j=\frac{P_j}{\Phi}\text{, }j=2,\dots ,n\text{,}
\end{align*}
where
\begin{align*}
    \Phi (p,z)=(\eta_1 -z_1)-A\cdot F(\eta_2 -z_2 ,\dots ,\eta_n -z_n)
\end{align*}
and $F$ needs to be chosen well. Then
\begin{align*}
    \omega = c\frac{\overline{\partial}P_2\wedge\dots\wedge\overline{\partial}P_n}{\Phi^n}\text{.}
\end{align*}

The big remaining (bumping) problem is to show that locally we can find a {\emph{pseudoconvex}} $\Omega_p^*$ and the the function $F$ such that:

\begin{itemize}
    \item $p\in \partial\Omega_p^*$,
    \item $\overline{\Omega}\setminus\{p\}\subseteq \Omega_p^*$,
    \item $\{z\in\mathbb{C}^n\colon\Phi (p,z)=0\}\cap\overline{\Omega_p^*}=\{p\}$,
    \item If $z\in\partial\Omega$, then $\operatorname{dist}(z,\partial{\Omega_p^*})\sim \Phi (p,z)$.
\end{itemize}

\appendix

\section{\\Bumping Lemma}

We briefly sketch the last remaining case for bumping in $\mathbb{C}^3$. 

Let $\gamma=\{ z\in \mathbb{C}^2 : g(z)=1\}$ where $g$ is holomorphic and homogeneous. Let $P_{2k}=P$ be homogeneous and plurisubharmonic and assume that $P\Big|_\gamma$ is harmonic. Thus $P\Big|_{\gamma_f}$ is also harmonic where $\gamma_f =\{ z \in \mathbb{C}^2 : g(z)=f\}, f\in \mathbb{R}$.

 If the Levi form of $P$ is strictly positive in the normal direction of $\cup_f \{\gamma_f \}=S$, then we proceed as follows.Let $s$ be subharmonic and such that $s \circ g$ is of degree $2k$ (taking roots of $g$ if needed). Now near $S$, we have $P- \e (s \circ g)$ is plurisubharmonic if $\e>0$ is small. Now write $P=P-\e(s\circ g)+\e (s \circ g)$ and bump $s \circ g$ as usual.

If the Levi form if $P$ is not strictly positive in the normal direction. We use a carefully adapted version of a classical bumping trick. Let $P'= P + \delta (A | \Im{g}|^2- |g|^2) |z|^{2s}$ and $s$ is chosen so that $|z|^{2s}|g|^2$ is of degree $2k$. Near $S$, $P' <P$ away from $S$ and the Levi form of $P'$ is strictly positive in the normal direction except near singularities of $g$. However, $s \circ g$ will also have a vanishing Levi form here, so we can choose $A$ to work in this case. Now we use the same trick as the last case; namely,
 we take $(P' - \e (s \circ g))+\e (s \circ g)$ where $P' -\e (s \circ g)$ is plurisubharmonic. 
 
 Now if $\gamma$ is isolated, we can patch with the other bumpings away from $S$. If not, we have two possibilities. First, we have a foliation of ``bad" $\gamma$'s, all level surfaces for $g$, and hence $P=s \circ g$, and we're done. Second, if not, we choose $\gamma_1, \dots, \gamma_n$ from the foliation. We bump near each $\gamma_j$ and carefully choose a cutoff function to glue the bumpings together. By carefully chosen, we mean we take the cutoff function $\chi$ so that it is harmonic in the direction of the complex curves in the foliation.   

Finally, we need to consider the case where $g$ is not homogeneous. Here we need to replace $\Im g$ to get a new function that works the same way locally. Now we choose a partition of unity relative to the local charts. If the derivatives fall on the functions in this partition, then these derivatives will be multiples of something that is zero on our surface $S$. 
 

\def\MR#1{\relax\ifhmode\unskip\spacefactor3000 \space\fi%
  \href{http://www.ams.org/mathscinet-getitem?mr=#1}{MR#1}}

\begin{bibdiv}
\begin{biblist}
\bib{B:model}{article}{
   author={Bharali, Gautam},
   title={Model pseudoconvex domains and bumping},
   journal={Int. Math. Res. Not. IMRN},
   date={2012},
   number={21},
   pages={4924--4965},
   issn={1073-7928},
   review={\MR{2993440}},
   doi={10.1093/imrn/rnr210},
}

\bib{BS:plurisubharmonic}{article}{
   author={Bharali, G.},
   author={Stens{\o}nes, B.},
   title={Plurisubharmonic polynomials and bumping},
   journal={Math. Z.},
   volume={261},
   date={2009},
   number={1},
   pages={39--63},
   review={\MR{2452636}},
}

\bib{C1:invariantmetrics}{article}{
   author={Catlin, D. W.},
   title={Estimates of invariant metrics on pseudoconvex domains of
   dimension two},
   journal={Math. Z.},
   volume={200},
   date={1989},
   number={3},
   pages={429--466},
   review={\MR{978601 (90e:32029)}},
}

\bib{JPD:type}{article}{
   author={D'Angelo, J. P.},
   title={Real hypersurfaces, orders of contact, and applications},
   journal={Ann. of Math. (2)},
   volume={115},
   date={1982},
   number={3},
   pages={615--637},
   issn={0003-486X},
   review={\MR{657241 (84a:32027)}},
   doi={10.2307/2007015},
}

\bib{JPD:book}{book}{
   author={D'Angelo, J. P.},
   title={Several complex variables and the geometry of real hypersurfaces},
   series={Studies in Advanced Mathematics},
   publisher={CRC Press, Boca Raton, FL},
   date={1993},
   pages={xiv+272},
   isbn={0-8493-8272-6},
   review={\MR{1224231}},
}

\bib{D:agbook}{book}{
   author={Demailly, J.-P.},
   title={Complex analytic and differential geometry},
   url={www-fourier.ujf-grenoble.fr/~demailly/manuscripts/agbook.pdf},
   pages={455},
}

\bib{DF}{article}{
   author={Diederich, K.},
   author={Forn\ae ss, J. E.},
   title={Proper holomorphic maps onto pseudoconvex domains with
   real-analytic boundary},
   journal={Ann. of Math. (2)},
   volume={110},
   date={1979},
   number={3},
   pages={575--592},
   issn={0003-486X},
   review={\MR{554386}},
   doi={10.2307/1971240},
}

\bib{DFF}{article}{
   author={Diederich, K.},
   author={Fischer, B.},
   author={Forn\ae ss, J. E.},
   title={H\"{o}lder estimates on convex domains of finite type},
   journal={Math. Z.},
   volume={232},
   date={1999},
   number={1},
   pages={43--61},
   issn={0025-5874},
   review={\MR{1714279}},
   doi={10.1007/PL00004758},
}

\bib{DFW}{article}{
   author={Diederich, K.},
   author={Forn\ae ss, J. E.},
   author={Wiegerinck, J.},
   title={Sharp H\"{o}lder estimates for $\overline\partial$ on ellipsoids},
   journal={Manuscripta Math.},
   volume={56},
   date={1986},
   number={4},
   pages={399--417},
   issn={0025-2611},
   review={\MR{860730}},
   doi={10.1007/BF01168502},
}

\bib{FK:holderestimates}{article}{
   author={Fefferman, C. L.},
   author={Kohn, J. J.},
   title={H\"older estimates on domains of complex dimension two and on
   three-dimensional CR manifolds},
   journal={Adv. in Math.},
   volume={69},
   date={1988},
   number={2},
   pages={223--303},
   review={\MR{946264 (89g:32027)}},
}

\bib{F1:supnorm}{article}{
   author={Forn{\ae}ss, J. E.},
   title={Sup-norm estimates for $\bar\partial$ in ${\bf C}^2$},
   journal={Ann. of Math. (2)},
   volume={123},
   date={1986},
   number={2},
   pages={335--345},
   review={\MR{835766 (87i:32008)}},
}

\bib{FS:maximaltangent}{article}{
   author={Forn{\ae}ss, J. E.},
   author={Stens{\o}nes, B.},
   title={Maximally tangent complex curves for germs of finite type $\sC^\infty$ pseudoconvex domains in $\bC^3$},
   journal={Math. Ann.},
   volume={347},
   date={2010},
   number={4},
   pages={979--991},
   review={\MR{2658152}},
}

\bib{FS:book}{book}{
   author={Forn\ae ss, J. E.},
   author={Stens\o nes, B.},
   title={Lectures on counterexamples in several complex variables},
   series={Mathematical Notes},
   volume={33},
   publisher={Princeton University Press, Princeton, NJ; University of Tokyo
   Press, Tokyo},
   date={1987},
   pages={viii+248},
   isbn={0-691-08456-4},
   review={\MR{895821}},
}

\bib{FS:alg}{article}{
    author={Forn\ae ss, J. E.},
    author={Stens\o nes, B.},
	TITLE = {Infinite type germs of real analytic pseudoconvex domains in
		{$\mathbb{C}^3$}},
	JOURNAL = {Complex Var. Elliptic Equ.},
	VOLUME = {57},
	YEAR = {2012},
	NUMBER = {6},
	PAGES = {705--717},
	ISSN = {1747-6933},
	DOI = {10.1080/17476933.2010.534144},
	URL = {https://doi.org/10.1080/17476933.2010.534144},
}

\bib{H1:dbar}{article}{
   author={Henkin, G. M.},
   title={Integral representation of functions in strongly pseudoconvex
   regions, and applications to the $\overline \partial $-problem},
   language={Russian},
   journal={Mat. Sb. (N.S.)},
   volume={82 (124)},
   date={1970},
   pages={300--308},
   review={\MR{0265625 (42 \#534)}},
}

\bib{H2:book}{book}{
   author={Henkin, G.},
   author={Leiterer, J.},
   title={Theory of functions on complex manifolds},
   series={Monographs in Mathematics},
   volume={79},
   publisher={Birkh\"auser Verlag},
   place={Basel},
   date={1984},
   pages={226},
   isbn={3-7643-1477-8},
   review={\MR{774049 (86a:32002)}},
}

\bib{Hor:acta}{article}{
   author={H\"ormander, L.},
   title={$L^{2}$ estimates and existence theorems for the $\bar \partial
   $\ operator},
   journal={Acta Math.},
   volume={113},
   date={1965},
   pages={89--152},
   issn={0001-5962},
   review={\MR{0179443}},
   doi={10.1007/BF02391775},
}

\bib{MR226387}{article}{
    AUTHOR = {H\"ormander, L.},
     TITLE = {Generators for some rings of analytic functions},
   JOURNAL = {Bull. Amer. Math. Soc.},
  FJOURNAL = {Bulletin of the American Mathematical Society},
    VOLUME = {73},
      YEAR = {1967},
     PAGES = {943--949},
      ISSN = {0002-9904},
   MRCLASS = {46.30 (32.00)},
  MRNUMBER = {226387},
MRREVIEWER = {James J. Kelleher},
       DOI = {10.1090/S0002-9904-1967-11860-3},
       URL = {https://doi.org/10.1090/S0002-9904-1967-11860-3},
}

\bib{Hor:SCV}{book}{
   author={H\"ormander, L.},
   title={An introduction to complex analysis in several variables},
   series={North-Holland Mathematical Library},
   volume={7},
   edition={3},
   publisher={North-Holland Publishing Co., Amsterdam},
   date={1990},
   pages={xii+254},
   isbn={0-444-88446-7},
   review={\MR{1045639}},
}

\bib{K:SCV}{book}{
   author={Krantz, S. G.},
   title={Function theory of several complex variables},
   publisher={AMS Chelsea Publishing},
   place={Providence, RI},
   date={2001},
   pages={xvi+564},
   isbn={0-8218-2724-3},
   review={\MR{1846625}},
}

\bib{L1:dbar}{article}{
   author={Lieb, I.},
   title={Ein Approximationssatz auf streng pseudokonvexen Gebieten},
   language={German},
   journal={Math. Ann.},
   volume={184},
   date={1969},
   pages={56--60},
   issn={0025-5831},
   review={\MR{0262540 (41 \#7146)}},
}

\bib{McN:convex}{article}{
   author={McNeal, J. D.},
   title={Convex domains of finite type},
   journal={J. Funct. Anal.},
   volume={108},
   date={1992},
   number={2},
   pages={361--373},
   issn={0022-1236},
   review={\MR{1176680}},
   doi={10.1016/0022-1236(92)90029-I},
}

\bib{McN:example}{article}{
   author={McNeal, J. D.},
   title={$L^2$ harmonic forms on some complete K\"ahler manifolds},
   journal={Math. Ann.},
   volume={323},
   date={2002},
   number={2},
   pages={319--349},
   issn={0025-5831},
   review={\MR{1913045}},
   doi={10.1007/s002080100305},
}

\bib{Noell}{article}{
   author={Noell, A.},
   title={Peak points for pseudoconvex domains: a survey},
   journal={J. Geom. Anal.},
   volume={18},
   date={2008},
   number={4},
   pages={1058--1087},
   issn={1050-6926},
   review={\MR{2438912}},
   doi={10.1007/s12220-008-9040-0},
}

\bib{Ram}{article}{
   author={Ram\'{\i}rez de Arellano, E.},
   title={Ein Divisionsproblem und Randintegraldarstellungen in der
   komplexen Analysis},
   language={German},
   journal={Math. Ann.},
   volume={184},
   date={1969/70},
   pages={172--187},
   issn={0025-5831},
   review={\MR{269874}},
   doi={10.1007/BF01351561},
}

\bib{R1:holder}{article}{
   author={Range, R. M.},
   title={Integral kernels and H{\"o}lder estimates for $\bar\partial$ on pseudoconvex domains of finite type in $\bC^2$},
   journal={Math. Ann.},
   volume={288},
   date={1990},
   number={1},
   pages={63--74},
   review={\MR{1070924}},
}

\bib{R2:book}{book}{
   author={Range, R. M.},
   title={Holomorphic functions and integral representations in several
   complex variables},
   series={Graduate Texts in Mathematics},
   volume={108},
   publisher={Springer-Verlag},
   place={New York},
   date={1986},
   pages={xx+386},
   review={\MR{847923 (87i:32001)}},
}

\bib{R3}{article}{
   author={Range, R. M.},
   title={A pointwise a-priori estimate for the
   $\overline{\partial}$-Neumann problem on weakly pseudoconvex domains},
   journal={Pacific J. Math.},
   volume={275},
   date={2015},
   number={2},
   pages={409--432},
   issn={0030-8730},
   review={\MR{3347375}},
   doi={10.2140/pjm.2015.275.409},
}

\bib{R4}{article}{
   author={Range, R. M.},
   title={An integral kernel for weakly pseudoconvex domains},
   journal={Math. Ann.},
   volume={356},
   date={2013},
   number={2},
   pages={793--808},
   issn={0025-5831},
   review={\MR{3048615}},
   doi={10.1007/s00208-012-0863-4},
}

\bib{Sib}{article}{
   author={Sibony, N.},
   title={Un exemple de domaine pseudoconvexe r\'{e}gulier o\`u l'\'{e}quation $\bar
   \partial u=f$ n'admet pas de solution born\'{e}e pour $f$ born\'{e}e},
   language={French},
   journal={Invent. Math.},
   volume={62},
   date={1980/81},
   number={2},
   pages={235--242},
   issn={0020-9910},
   review={\MR{595587}},
   doi={10.1007/BF01389159},
}
\bib{Lars}{article}{
   author={Simon, L.},
   title={Bumping in Higher Dimensions},
   journal={Forthcoming},
}

\bib{LarsBerit}{article}{
   author={Simon, L.},
   author={Stens\o nes, B.},
   title={On Newton Diagrams of Plurisubharmonic Polynomials},
   journal={J. Geom. Anal.},
   volume={29},
   date={2019},
   number={3},
   pages={2738--2751},
   issn={1050-6926},
   review={\MR{3969442}},
   doi={10.1007/s12220-018-0092-5},
}

\bib{S:l2estimates}{article}{
   author={Skoda, H.},
   title={Application des techniques $L^{2}$ \`a la th\'eorie des id\'eaux
   d'une alg\`ebre de fonctions holomorphes avec poids},
   language={French},
   journal={Ann. Sci. \'Ecole Norm. Sup. (4)},
   volume={5},
   date={1972},
   pages={545--579},
   review={\MR{0333246 (48 \#11571)}},
}

\end{biblist}
\end{bibdiv}

\end{document}